\documentclass[11pt]{article}
\usepackage[utf8x]{inputenc}
\usepackage{quoting}
\newcommand{\vertiii}[1]{{\left\vert\kern-0.25ex\left\vert\kern-0.25ex\left\vert #1
    \right\vert\kern-0.25ex\right\vert\kern-0.25ex\right\vert}}
\quotingsetup{vskip=0pt,leftmargin=10pt,rightmargin=10pt}


\def\showauthornotes{1}
\def\showkeys{0}
\def\showdraftbox{0}
\def\usemicrotype{1}
\def\showfixme{0}


\usepackage{etex}


\usepackage[l2tabu, orthodox]{nag}


\usepackage{xspace,enumerate}

\usepackage[dvipsnames]{xcolor}

\usepackage[T1]{fontenc}
\usepackage[full]{textcomp}

\usepackage[american]{babel}


\usepackage{mathtools}




\usepackage{amsthm}

\newtheorem{theorem}{Theorem}[section]
\newtheorem*{theorem*}{Theorem}

\newtheorem{proposition}[theorem]{Proposition}
\newtheorem*{proposition*}{Proposition}
\newtheorem{lemma}[theorem]{Lemma}
\newtheorem*{lemma*}{Lemma}

\newtheorem*{conjecture*}{Conjecture}
\newtheorem{fact}[theorem]{Fact}
\newtheorem*{fact*}{Fact}

\newtheorem*{hypothesis*}{Hypothesis}

\theoremstyle{definition}

\theoremstyle{remark}

\newtheorem*{claim*}{Claim}

\newtheorem*{remark*}{Remark}

\newtheorem*{observation*}{Observation}


\usepackage[
letterpaper,
top=0.7in,
bottom=0.9in,
left=1in,
right=1in]{geometry}



\usepackage[varg]{pxfonts}
\usepackage{textcomp}
\usepackage[scr=rsfso]{mathalfa}
\usepackage{bm} 
\linespread{1.08}
\let\mathbb\varmathbb


\ifnum\showkeys=1
\usepackage[color]{showkeys}
\fi


\usepackage{hyperref}
\hypersetup{
pagebackref,
colorlinks=true,
urlcolor=blue,
linkcolor=blue,
citecolor=OliveGreen,
}
\usepackage{prettyref}

%



\newcommand{\savehyperref}[2]{\texorpdfstring{\hyperref[#1]{#2}}{#2}}

\newrefformat{eq}{\savehyperref{#1}{\textup{(\ref*{#1})}}}
\newrefformat{lem}{\savehyperref{#1}{Lemma~\ref*{#1}}}
\newrefformat{def}{\savehyperref{#1}{Definition~\ref*{#1}}}
\newrefformat{thm}{\savehyperref{#1}{Theorem~\ref*{#1}}}
\newrefformat{cor}{\savehyperref{#1}{Corollary~\ref*{#1}}}
\newrefformat{cha}{\savehyperref{#1}{Chapter~\ref*{#1}}}
\newrefformat{sec}{\savehyperref{#1}{Section~\ref*{#1}}}
\newrefformat{app}{\savehyperref{#1}{Appendix~\ref*{#1}}}
\newrefformat{tab}{\savehyperref{#1}{Table~\ref*{#1}}}
\newrefformat{fig}{\savehyperref{#1}{Figure~\ref*{#1}}}
\newrefformat{hyp}{\savehyperref{#1}{Hypothesis~\ref*{#1}}}
\newrefformat{alg}{\savehyperref{#1}{Algorithm~\ref*{#1}}}
\newrefformat{rem}{\savehyperref{#1}{Remark~\ref*{#1}}}
\newrefformat{item}{\savehyperref{#1}{Item~\ref*{#1}}}
\newrefformat{step}{\savehyperref{#1}{step~\ref*{#1}}}
\newrefformat{conj}{\savehyperref{#1}{Conjecture~\ref*{#1}}}
\newrefformat{fact}{\savehyperref{#1}{Fact~\ref*{#1}}}
\newrefformat{prop}{\savehyperref{#1}{Proposition~\ref*{#1}}}
\newrefformat{prob}{\savehyperref{#1}{Problem~\ref*{#1}}}
\newrefformat{claim}{\savehyperref{#1}{Claim~\ref*{#1}}}
\newrefformat{relax}{\savehyperref{#1}{Relaxation~\ref*{#1}}}
\newrefformat{red}{\savehyperref{#1}{Reduction~\ref*{#1}}}
\newrefformat{part}{\savehyperref{#1}{Part~\ref*{#1}}}


\newcommand{\Sref}[1]{\hyperref[#1]{\S\ref*{#1}}}

\usepackage{nicefrac}



\ifnum\usemicrotype=1
\usepackage{microtype}
\fi

\ifnum\showauthornotes=1
\newcommand{\Authornote}[2]{{\sffamily\small\color{red}{[#1: #2]}}}
\newcommand{\Authornotecolored}[3]{{\sffamily\small\color{#1}{[#2: #3]}}}
\newcommand{\Authorcomment}[2]{{\sffamily\small\color{gray}{[#1: #2]}}}
\newcommand{\Authorstartcomment}[1]{\sffamily\small\color{gray}[#1: }

\newcommand{\Authorfnote}[2]{\footnote{\color{red}{#1: #2}}}
\newcommand{\Authorfixme}[1]{\Authornote{#1}{\textbf{??}}}
\newcommand{\Authormarginmark}[1]{\marginpar{\textcolor{red}{\fbox{\Large #1:!}}}}
\else
\newcommand{\Authornote}[2]{}
\newcommand{\Authornotecolored}[3]{}
\newcommand{\Authorcomment}[2]{}
\newcommand{\Authorstartcomment}[1]{}

\newcommand{\Authorfnote}[2]{}
\newcommand{\Authorfixme}[1]{}
\newcommand{\Authormarginmark}[1]{}
\fi




\ifnum\showfixme=0

\fi

\usepackage{boxedminipage}

\newcommand\bdot\bullet


\ifx\mathds\undefined 

\else

\fi




\DeclareMathOperator{\Tr}{Tr}















\renewcommand{\leq}{\leqslant}

\renewcommand{\geq}{\geqslant}

\ifnum\showdraftbox=1
\newcommand{\draftbox}{\begin{center}
  \fbox{%
    \begin{minipage}{2in}%
      \begin{center}%
          \Large\textsc{Working Draft}\\%
        Please do not distribute%
      \end{center}%
    \end{minipage}%
  }%
\end{center}
\vspace{0.2cm}}
\else
\newcommand{\draftbox}{}
\fi


\let\epsilon=\varepsilon

\numberwithin{equation}{section}



\newcommand\MYcurrentlabel{xxx}

\newcommand{\MYstore}[2]{%
  \global\expandafter \def \csname MYMEMORY #1 \endcsname{#2}%
}

\newcommand{\MYload}[1]{%
  \csname MYMEMORY #1 \endcsname%
}

\newcommand{\MYnewlabel}[1]{%
  \renewcommand\MYcurrentlabel{#1}%
  \MYoldlabel{#1}%
}

\newcommand{\MYdummylabel}[1]{}

\newcommand{\torestate}[1]{%
  \let\MYoldlabel\label%
  \let\label\MYnewlabel%
  #1%
  \MYstore{\MYcurrentlabel}{#1}%
  \let\label\MYoldlabel%
}

\newcommand{\restatetheorem}[1]{%
  \let\MYoldlabel\label
  \let\label\MYdummylabel
  \begin{theorem*}[Restatement of \prettyref{#1}]
    \MYload{#1}
  \end{theorem*}
  \let\label\MYoldlabel
}

\newcommand{\restatelemma}[1]{%
  \let\MYoldlabel\label
  \let\label\MYdummylabel
  \begin{lemma*}[Restatement of \prettyref{#1}]
    \MYload{#1}
  \end{lemma*}
  \let\label\MYoldlabel
}

\newcommand{\restateprop}[1]{%
  \let\MYoldlabel\label
  \let\label\MYdummylabel
  \begin{proposition*}[Restatement of \prettyref{#1}]
    \MYload{#1}
  \end{proposition*}
  \let\label\MYoldlabel
}

\newcommand{\restatefact}[1]{%
  \let\MYoldlabel\label
  \let\label\MYdummylabel
  \begin{fact*}[Restatement of \prettyref{#1}]
    \MYload{#1}
  \end{fact*}
  \let\label\MYoldlabel
}

\newcommand{\restate}[1]{%
  \let\MYoldlabel\label
  \let\label\MYdummylabel
  \MYload{#1}
  \let\label\MYoldlabel
}


\newcommand{\addreferencesection}{
  \phantomsection
  \addcontentsline{toc}{section}{References}
}



\let\origparagraph\paragraph
\renewcommand{\paragraph}[1]{\origparagraph{#1.}}


\allowdisplaybreaks


\sloppy



\usepackage{paralist}


\usepackage{comment}

\let\citet\cite

\theoremstyle{definition}


\usepackage{relsize}
\usepackage[font=footnotesize]{caption}
\usepackage{yfonts}



\DeclareUrlCommand\email{}

\usepackage{appendix}
\usepackage{amssymb}
\usepackage{amsfonts}
\usepackage{latexsym}
\usepackage{amsmath}


\usepackage[T1]{fontenc}
\usepackage{fullpage,appendix}
\usepackage{dsfont}
\usepackage[capitalize]{cleveref}

\usepackage{color}
\usepackage{tikz}

\newcommand{\ignore}[1]{}
\definecolor{corlinks}{RGB}{64,128,128}
\definecolor{cormenu}{RGB}{0,37,94}
\definecolor{corurl}{RGB}{0,46,91}

\renewcommand{\int}{\mathsf{int}}


\renewcommand{\hat}{\widehat}

\makeatletter
\let\orgdescriptionlabel\descriptionlabel
\renewcommand*{\descriptionlabel}[1]{%
  \let\orglabel\label
  \let\label\@gobble
  \phantomsection
  \edef\@currentlabel{#1}%
  \let\label\orglabel
  \orgdescriptionlabel{#1}%
}
\makeatother


\setcounter{page}{1}

\title{A Matrix Bernstein Inequality for Strong Rayleigh Distributions\footnote{This paper was submitted to STOC 2020, during the process of which another paper \cite{ABY19} appeared which proves a slightly stronger result. This note is being released in its unedited form.}}

\author{
Tarun Kathuria \\ EECS, UC Berkeley \\
\text{tarunkathuria@berkeley.edu}
}

\date{}

\setcounter{tocdepth}{1}

\begin{document}

\maketitle
\draftbox
\thispagestyle{empty}

\begin{abstract}
The Entropy method provides a powerful framework for proving scalar concentration inequalities by establishing functional inequalities like Poincare and log-Sobolev inequalities. These inequalities are especially useful for deriving concentration for dependent distributions coming from stationary distributions of Markov chains. In contrast to the scalar case, the study of matrix concentration inequalities is fairly recent and there does not seem to be a general framework for proving matrix concentration inequalities using the Entropy method. In this paper, we initiate the study of Matrix valued functional inequalities and show how matrix valued functional inequalities can be used to establish a kind of iterative variance comparison argument to prove Matrix Bernstein like inequalities. Strong Rayleigh distributions are a class of distributions which satisfy a strong form of negative dependence properties. We show that scalar Poincare inequalities recently established for a flip-swap markov chain considered in the work of Hermon and Salez \cite{HS19} can be \textit{lifted} to prove matrix valued Poincare inequalities. This can then be combined with the iterative variance argument to prove a version of matrix Bernstein inequality for Lipshitz matrix-valued functions for Strong Rayleigh measures. 
\end{abstract}

\setcounter{page}{1}
\color{black}
\section{Introduction}
Concentration inequalities bound tail probabilities of general functions of random variables. Several methods have been known to prove such inequalities, including martingale methods, information theoretic methods, Talagrand's induction method as well as a variety of problem-specific methods. Ledoux (see \cite{Led} for a survey) developed a powerful framework known as the "entropy method" for proving concentration inequalities. This framework is concerned with proving functional inequalities (like Poincare and log-Sobolev inequalities) which bound some global divergence of any function whose concentration properties are of interest to us by quantities which measure "local variation" of this function under some diffusion process whose stationary measure is our measure of interest. This framework can be used to recover strong forms of concentration results for scalar functions which seem to be inaccessible through many other strategies for proving concentration. One prominent example of this is concentration of scalar Lipshitz functionals of distributions which are the stationary measure of some rapid mixing markov chain. We refer to the excellent book \cite{BLM13} for details on this and other methods for proving scalar concentration inequalities. The entropy method has also been immensely useful in proving deep theorems in the analysis of boolean functions and we refer the reader to \cite{OD} for more details on this fascinating topic.

The field of Matrix valued concentration of measure is concerned with the phenomenon of matrix valued functions being close (usually in spectral norm) to their expected value with high probability. In comparison to scalar concentration inequalities, results on Matrix concentration are much more recent. However, they have already had tremendous applications in the field of computer science, statistics and information theory. Some examples include applications to spectral graph theory, statistical learning problems, deep learning, sketching . While a large class of applications can be handled using matrix concentration of sums of independent random matrices, there are several applications where there is some dependence structure in the sums of random matrices. Some examples of such applications are simplifying Laplacian linear equations (cite), semi-streaming graph sparsification among others.

Despite the recent flurry of activity in proving and using matrix concentration inequalities, as pointed out by \cite{CT13}, one frequently encounters random matrices that do not submit to existing techniques and the study of matrix concentration inequalities remains an active area of investigation.

Negative dependence of random variables is an intuitive property that suggests that it might help with concentration of measure. A class of negatively dependent measures are strong Rayleigh measures which were introduced and deeply studied in the work of \cite{BBL09}. Examples of such distributions include determinantal measures and random spanning tree distributions. \cite{BBL09} show that strong Rayleigh distributions satisfy the strongest form of negative dependence properties. These negative dependence properties were recently exploited
to design approximation algorithms  (cite) They have also been used in numerical linear algebra and algorithmic fairness for selecting subsets of elements which maximize some notion of diversity.

In this paper, we are interested in understanding matrix concentration of random matrices whose distribution is coming from the stationary distribution of a Markov chain. While some results in this line of work exist in the work of \cite{PMT,MJCFT} which extends Chaterjee's work \cite{Chaterjee} on proving scalar concentration inequalities via Stein's method. The kind of markov chains that this line of work can handle is fairly restricted as it requires a stringent condition known as the Dobrushin condition which is not always to prove. Nevertheless, these results imply strong matrix concentration results in a variety of scenarios, most notably matrix Efron Stein inequalities. Another line of work is the paper of Chen and Tropp \cite{CT13} where they are  interested in extending the entropy method to the matrix valued case. However, their results are fairly restrictive since they require rotational symmetry of the random matrices. The reason for the rotational symmetry requirement is the need for a "Herbst argument" which requires building a differential equation for the exponential moment generating function. However, since \cite{CT13} can only control trace of quantities similar to variance and Dirichlet form (which we also study here). Since they are only controlling the trace, they can only transform the trace of those functions into a differential form assuming rotational symmetry of those matrices which lets them ignore any non-commutative structure of the sums of those random matrices. We refer the reader to \cite{CT13} for a more detailed discussion. Our main idea here is to construct matrix valued functional inequalities and show how these can be used to derive matrix concentration inequalities without any rotational symmetry assumption. While we cannot prove matrix valued (modified) log-Sobolev inequalities we do show that matrix concentration by establishing matrix valued Poincare inequalities (which are far more common for various markov chains as opposed to modified log-sobolev). While our techniques seem to be fairly general, we illustrate it for deriving matrix concentration for strongly Rayleigh measures.

We now formally define strong Rayleigh measures. Let $\mu : 2^{[n]}\rightarrow \mathbb{R}_{+}$ be a probability distribution on the subsets of $[n]=\{1,2,\ldots,n\}$. In particular, we assume that $\mu(\cdot)$ is nonnegative and $\sum\limits_{S \subseteq [n]}\mu(S)=1$. We associate a multi-affine polynomial with variables $z_1,\ldots,z_n$ to $\mu$
\begin{align*}
  g_\mu(z_1,\ldots,z_n)=\sum\limits_{S \subseteq [n]}\mu(S)\prod_{i\in S}z_i
\end{align*}
The polynomial $g_\mu$ is known as the generating polynomial of $\mu$. We say $\mu$ is $k$-homogenous if $g_\mu$ is a homegenous polynomial of degree $k$. A polynomial $p(z_1,\ldots,z_n)\in \mathbb{C}[z_1,\ldots,z_n]$ is said to be stable if whenever Im$(z_i)>0$ for all $i\in[n]$, $p(z_1,\ldots,z_n)\neq 0$. We say $p$ is real stable is $p$ is stable and all its coefficients are real.
Anari et al. \cite{AGR16} proved a mixing time bound by proving a poincare constant for a bases exchange markov chain associated to strong Rayleigh measures. Hermon and Salez\cite{HS19} gave a unified proof of a log-sobolev and Poincare inequality in the scalar case. One of our main results is showing that one can effortlessly follow their exact proof and make minor modifications(using operator convexity of certain functions) to prove a matrix valued Poincare inequality (which has also been considered in \cite{CHT15,CH15,CH16}). Our second main result implies that one can use matrix valued Poincare inequalities to prove matrix Bernstein like concentration inequalities. See \cite{Led} for how to prove similar concentration inequalities for scalar lipshitz valued functions using Poincare and log-sobolev inequalities. Combining our two main theorems, we get the following result.

\begin{theorem}\label{thm:srBern} Given a $k$-homogenous strong Rayleigh measure $\pi$ supported on $n$ elements and given a L-Lipshitz matrix valued function $\mathbf{F}:\{0,1\}^n\rightarrow\mathbb{S}_d$ and let $\mathbf{\mu}=\mathbb{E}_{x\sim\pi}[\mathbf{F(x)}]$. We have for all $t>0$,
  \begin{align*}
    \mathsf{Pr}_{x \sim \pi}[\|\mathbf{F(x)-\mathbf{\mu}}\|\geq t]\leq 2 de^{-t^2/32(kL^2 + t\sqrt{k}L)}
  \end{align*}
\end{theorem}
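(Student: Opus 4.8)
The plan is to obtain Theorem~\ref{thm:srBern} by chaining the paper's two main results: the matrix-valued Poincare inequality for the Hermon--Salez flip--swap chain whose stationary distribution is a $k$-homogeneous strong Rayleigh measure, and the general implication that a matrix Poincare inequality together with a Lipschitz hypothesis yields a matrix Bernstein inequality via an iterative variance-comparison argument. Concretely, write $\mathbf{X}=\mathbf{F}-\mu$, which is centered, and work with the trace exponential $\phi(\theta)=\Tr\,\E_{x\sim\pi}\big[e^{\theta\mathbf{X}(x)}\big]$, so that $\phi(0)=d$ and $\phi'(0)=0$. By the matrix Laplace-transform method, the theorem follows once one establishes, for some absolute constants, a sub-Gaussian estimate $\phi(\theta)\le d\cdot e^{O(\theta^2 k L^2)}$ on a window $0<\theta\le\theta_0$ with $\theta_0\asymp 1/(\sqrt{k}\,L)$, and then optimizes the bound $e^{-\theta t}\phi(\theta)$ over $\theta\in(0,\theta_0]$.

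First I would instantiate the matrix Poincare inequality for the reversible chain with stationary measure $\pi$: for every $\mathbf{G}\colon\{0,1\}^n\to\mathbb{S}_d$, $\E[\mathbf{G}^2]-(\E\,\mathbf{G})^2\preceq C k\,\mathcal{E}(\mathbf{G})$, where $\mathcal{E}$ is the matrix Dirichlet form of the chain and $C$ an absolute constant. The important point, stressed in the introduction, is that the scalar proof of Hermon--Salez \cite{HS19} uses only operator convexity of certain one-variable functions, so it lifts to the matrix setting essentially verbatim and, unlike \cite{CT13}, needs no rotational symmetry. Second, since each transition of the chain changes only $O(1)$ coordinates of $x$, the $L$-Lipschitz hypothesis gives $\norm{\mathbf{F}(x)-\mathbf{F}(y)}=O(L)$ along every edge; hence $\mathcal{E}(\mathbf{F})\preceq O(L^2)\,\mathbf{I}$ and, by the Poincare inequality, $\mathrm{Var}_\pi[\mathbf{F}]=\E[\mathbf{X}^2]-(\E\,\mathbf{X})^2\preceq O(k L^2)\,\mathbf{I}$. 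This identifies $\sigma^2\asymp k L^2$ as the variance scale (and $\sqrt{k}\,L$ as the width parameter) that will appear in the Bernstein bound, and it supplies the per-edge increment bound used in the next step.

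The heart of the matter is the iterative variance comparison. Apply the matrix Poincare inequality to the matrix function $\mathbf{G}=e^{\theta\mathbf{X}/2}$ and take traces: operator convexity of $s\mapsto s^2$ gives $(\E\,\mathbf{G})^2\preceq\E[\mathbf{G}^2]=\E[e^{\theta\mathbf{X}}]$, so the left-hand side equals $\phi(\theta)-\Tr\big[(\E\,\mathbf{G})^2\big]$, which one compares with $\phi(\theta/2)^2=(\Tr\,\E\,\mathbf{G})^2$; on the right-hand side one bounds $\Tr\,\mathcal{E}(e^{\theta\mathbf{X}/2})\le O(\theta^2 L^2)\,\phi(\theta)$ using the Duhamel representation of $e^{\mathbf{A}}-e^{\mathbf{B}}$ as a path integral of $e^{s\mathbf{A}}(\mathbf{A}-\mathbf{B})e^{(1-s)\mathbf{B}}$, cyclicity of the trace, and the edge bound from the previous step. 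Combining yields a self-improving relation between $\phi$ at dyadic scales — morally $\phi(\theta)\le\phi(\theta/2)^2/\big(1-O(\theta^2 k L^2)\big)$ on the window — which, iterated down to scale $\theta/2^m$ and passed to the limit (the correction factors forming a convergent product contributing $e^{O(\theta^2 k L^2)}$), telescopes to $\phi(\theta)\le d\cdot e^{O(\theta^2 k L^2)}$, with the dimensional factor $d$ needing care so that it enters exactly once. Finally, $\mathsf{Pr}[\lmax(\mathbf{X})\ge t]\le e^{-\theta t}\phi(\theta)\le d\,e^{-\theta t+O(\theta^2 k L^2)}$: for $t\lesssim\sqrt{k}\,L$ the unconstrained optimizer $\theta\asymp t/(k L^2)$ lies in the window and gives $e^{-\Omega(t^2/(k L^2))}$, while for larger $t$ one takes $\theta=\theta_0$ and gets $e^{-\Omega(t/(\sqrt{k}\,L))}$; both regimes are dominated by $e^{-\Omega(t^2/(k L^2+t\sqrt{k}\,L))}$. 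Running the same argument for $-\mathbf{X}$ and taking a union bound over the two one-sided tails (whose union is precisely the event $\norm{\mathbf{X}}\ge t$) produces the prefactor $2d$, and carrying the absolute constants through the estimates gives the claimed exponent $t^2/\big(32(k L^2+t\sqrt{k}\,L)\big)$.

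The step I expect to be the main obstacle is the trace bound on $\mathcal{E}(e^{\theta\mathbf{X}/2})$ in the iterative comparison, together with the dimensional bookkeeping around it. The matrix exponential is not operator Lipschitz with a dimension-free constant, so one cannot simply factor out $\norm{\mathbf{X}(x)-\mathbf{X}(y)}$; the estimate has to route through the Duhamel formula and operator-convexity inequalities, push the exponential weights to the outside of the trace, and control the constants tightly enough that the iterated recursion closes with the factor $d$ appearing only once rather than compounding over the $\sim\log(1/\theta_0)$ iterations. This is exactly the regime where working with genuinely matrix-valued functional inequalities — rather than the trace-only quantities of \cite{CT13} — is what makes the argument go through.
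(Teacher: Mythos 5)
Your overall reduction is the same as the paper's: a matrix Poincar\'e inequality for the flip--swap chain (Theorem~\ref{thm:mpc}), an iterated variance comparison turning it into a trace-mgf bound, and the matrix Laplace transform with $v(\mathbf{F})\le 2L$ (a flip or swap moves Hamming distance at most $2$) and $1/\lambda\le 2k$. The gap is in the core iteration, and it is exactly the point you flag but do not resolve. Your recursion compares $\phi(\theta)=\Tr\E[e^{\theta\mathbf{X}}]$ with the \emph{square of the scalar trace}, $\phi(\theta/2)^2=\big(\Tr\E[e^{\theta\mathbf{X}/2}]\big)^2$, via $\Tr[(\E\,\mathbf{G})^2]\le(\Tr\E\,\mathbf{G})^2$. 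Iterating $\phi(\theta)\le\phi(\theta/2)^2/\big(1-O(\theta^2kL^2)\big)$ down $m$ dyadic levels leaves the factor $\phi(\theta/2^m)^{2^m}$; since $\mathbf{X}$ is centered, $\phi(\theta/2^m)=d+O(4^{-m})$, so this factor grows like $d^{2^m}$ and diverges for every $d\ge 2$. The dimension compounds at each doubling, the convergent product of correction factors cannot absorb it, and no tuning of absolute constants repairs it (the argument is fine only for $d=1$, where it is the classical scalar Poincar\'e-to-exponential-concentration iteration). Saying that ``$d$ must enter exactly once'' identifies the obstacle; it does not supply the mechanism.

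The paper's proof of Theorem~\ref{thm:mgfBound} closes this by never scalarizing at intermediate scales. Its induction statement, Equation~\ref{eqn:indStmt}, compares $\Tr[\E e^{\mathbf{F}}]$ directly with $\Tr\big[\big(\E[e^{\mathbf{F}/2^k}]\big)^{2^k}\big]$, the trace of the $2^k$-th \emph{matrix} power, whose $k\to\infty$ limit is $\Tr[e^{\E\mathbf{F}}]$ --- this is the single place where $d$ enters after centering. The price is that the Dirichlet-form error generated at scale $2^{-k}$ must be controlled after raising to the trace power $p=2^{k-1}$: this uses the trace Jensen inequality (Lemma~\ref{lem:jensen}) together with Lemmas~\ref{lem:var} and~\ref{lem:dirichlet} at \emph{general} $p$, i.e.\ Schatten-$2p$ estimates built from Duhamel's formula (Fact~\ref{fct:duhamel}) and Fact~\ref{fct:intNorm}, yielding $\Tr\big[\big(\mathcal{E}(e^{\mathbf{F}/2^k},e^{\mathbf{F}/2^k})\big)^{2^{k-1}}\big]\le v_k^{2^k}\,\Tr[\E e^{\mathbf{F}}]$, so every error term is referred back to the top-level quantity $\Tr[\E e^{\mathbf{F}}]$ and the errors sum to $\lambda v(\mathbf{F})^2 S_k\le\lambda v(\mathbf{F})^2$. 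The $p=1$ instances you invoke are correct but insufficient to close the induction. Once the resulting mgf bound $\Tr\E[e^{\theta\mathbf{F}}]\le d/\big(1-\theta^2\lambda v(\mathbf{F})^2\big)$ is available, your remaining steps (Laplace transform, optimization over the window $\theta^2\lambda v(\mathbf{F})^2\le 1$, substitution of $v(\mathbf{F})\le 2L$ and $\lambda\ge 1/2k$) coincide with the paper's and produce the stated exponent $t^2/\big(32(kL^2+t\sqrt{k}L)\big)$.
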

A matrix valued function $\mathbf{F}:\Omega\rightarrow \mathbb{S}_d$ is L-Lipshitz if for all $x,y \in \Omega$
\begin{align*}
  \|\mathbf{F(x)}-\mathbf{F(y)}\|\leq L\cdot d(x,y)
\end{align*}
with respect to some distance function between states $d(x,y)$. In this paper, we will use the hamming distance.
We remark that a matrix Chernoff bound for strong Rayleigh measures was proven by \cite{KyngSong18} by martingale arguments inspired by the scalar concentration case by \cite{PP14}. Their application was to prove that spanning trees can be used to construct spectral sparsifiers. While our concentration result can't prove a similar statement, our result can be used in situations theirs cant. We compare our result with theirs in Section \ref{sec:ks}.

We finally define some terminology we will require pertaining to Markov chains and refer the reader to \cite{MT06,LPW} for more details.  Let the stationary distribution of a reversible Markov chain with state space $\Omega$ be $\mu$
\begin{align*}
  \mathbb{E}_\mu[\mathbf{F}]&= \sum\limits_{x \in \Omega}\mu(x)\mathbf{F(x)}\\
  \mathbf{Var}_\mu[\mathbf{F}]&=\mathbb{E}_\mu[\mathbf{F}^2]-(\mathbb{E}(\mathbf{F(x)})^2
\end{align*}
Consider a generator for the Markov chain $Q\in\mathbb{R}^{\Omega\times\Omega}$ with non-negative off-diagonal entries with each row summing up to 0 and satisfying the local balance equations
\begin{align*}
  \pi(x)Q(x,y) = \pi(y)Q(y,x)
\end{align*}
For \textit{normalized} generators, one can associate a stochastic matrix $P$ such that $Q=P-I$. In such cases, $P$ also satsifies local balance. Finally, the Dirchlet form for scalar functions is $\mathcal{E}_Q[f,g]=\sum_{x\in\Omega,y\in\Omega}\pi(x)Q(x,y)(f(x)-f(y))^2$. It is easy to see that $\mathcal{E}_Q[f,g]=\mathcal{E}_P[f,g]$. For matrix valued functions $\mathbf{F,G}:\Omega\rightarrow \mathbb{S}_d$, the dirchlet form is defined as $\mathcal{E}_Q[\mathbf{F},\mathbf{G}]=\mathcal{E}_P[\mathbf{F},\mathbf{G}]=\sum_{x\in\Omega,y\in\Omega}\pi(x)Q(x,y)(\mathbf{F(x)}-\mathbf{F(y)})^2$
where $\mathbb{S}_d$ is the space of symmetric matrices of size $d\times d$.

\section{Matrix Valued Poincare Inequality for Strong Rayleigh measures}
In this section, we prove our matrix valued Poincare inequality for measures satisfying the stochastic covering property (SCP). We first need some notation in order to define SCP measures. Let $\{e_i\}_{i=1}^{n}$ be the coordinate basis of $\mathbb{R}^n$. For $x,y \in \{ 0,1\}^n$, we write $x\triangleright y$ if $x$ can be obtained by $y$ by increasing at most one coordinate from 0 to 1, i.e.,
\begin{align*}
  x=y \ \text{ or } \ \exists i \in [n], \ x=y+e_i
\end{align*}
This \textit{covering} relation can be lifted to measures on $\{ 0,1\}^n$ by writing $\pi \triangleright \pi'$ whenever there is a coupling of $\pi$ and $\pi'$ which is supported on the set $\{(x,y): x \triangleright y\}$. We say that $\pi$ has the SCP if for any $S \subseteq [n]$ and $x,y \in \{0,1\}^S$, we have
\begin{align*}
  x \triangleright y \implies \pi(X_{S^c} = \cdot | X_S = y)\triangleright \pi(X_{S^c} = \cdot | X_S = x)
\end{align*}
This property was proposed by Pemantle and Peres \cite{PP14} and they proved that all strong Rayleigh measures satisfy the SCP. However, there are examples of matroids for which the uniform distribution has the SCP but do not have the strong Rayleigh property. Hermon and Salez \cite{HS19} consider the following local dynamics :
\begin{align*}
  x \sim y \iff x,y \text{ differ by a flip or a swap}
\end{align*}
where $x,y$ differ by a flip if $x=y \pm e_i$ for some $i$ and by a swap if $y=x+e_j-e_k$ for distinct $j$ and $k$. And then they say that $Q$ is a flip-swap walk Markov generator for $\pi$ which is reversible under $\pi$ and we have
\begin{align*}
  Q_{xy} > 0 \iff x\sim y
\end{align*}
If in addition, we have $\Delta(Q) = \max\{-Q_{xx} : x \in \Omega\} \leq 1$, we say that $Q$ is normalized and $Q$ may hence be written as $Q=P-I$ for some stochastic matrix $P$. \cite{HS19} proved that there is a normalized flip-swap walk for an SCP distribution has a modified log-Sobolev and Poincare constant $1/2k$. We will prove using their proof strategy that this walk actually has it's Matrix valued Poincare constant also $1/2k$.
\begin{theorem}\label{thm:mpc} If $\pi$ is a $k$-homogenous SCP measure, then there is a  normalized flip-swalk walk $Q$ for $\pi$ such that for any matrix valued function $\mathbf{f} : \Omega \rightarrow \mathbb{S}_d$
  \begin{align*}
    \lambda\mathbf{Var}[\mathbf{f}]\preceq \mathcal{E}(\mathbf{f},\mathbf{f})
  \end{align*}
  with $\lambda \geq \frac{1}{2k}$. Furthermore, the conclusion remains valid without the homogeneity assumption, with $k=n/2$
\end{theorem}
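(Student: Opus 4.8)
The plan is to replay the proof of the scalar Poincar\'e inequality of Hermon and Salez \cite{HS19} essentially line by line, replacing every scalar quantity by its $\mathbb{S}_d$-valued analogue and every appeal to convexity of $t\mapsto t^2$ by the corresponding Loewner-order inequality. The combinatorial heart of their argument --- the coupling furnished by the SCP, which for (almost) every pair $x,y$ in the support of $\pi$ produces a random flip-swap path from $x$ to $y$ of controlled length and congestion --- involves only $\pi$ and the flip-swap graph, not the test function, so it transfers unchanged; I would take $Q$ to be exactly the normalized flip-swap generator that \cite{HS19} construct to carry that coupling. Only the steps that touch $\mathbf{f}$ need modification.

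First I would record the two identities that hold verbatim in $\mathbb{S}_d$: the variance identity $\mathbf{Var}_\pi[\mathbf{f}] = \tfrac12\,\mathbb{E}_{x,y\sim\pi\otimes\pi}[(\mathbf{f}(x)-\mathbf{f}(y))^2]$, valid because $\mathbf{f}(x)$ and $\mathbf{f}(y)$ are independent with common law (crucially this form involves no cross term $\mathbf{f}(x)\mathbf{f}(y)$, which would require commutativity), and the law of total variance $\mathbf{Var}_\pi[\mathbf{f}] = \mathbb{E}[\mathbf{Var}[\mathbf{f}\mid\mathcal G]] + \mathbf{Var}[\mathbb{E}[\mathbf{f}\mid\mathcal G]]$, which follows because $\mathbb{E}[\,\cdot\mid\mathcal G]$ pulls $\mathcal G$-measurable matrices out on either side. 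Then, along any path $x=z_0\sim z_1\sim\cdots\sim z_\ell=y$ produced by the coupling, I would telescope $\mathbf{f}(x)-\mathbf{f}(y)=\sum_{i=1}^\ell(\mathbf{f}(z_{i-1})-\mathbf{f}(z_i))$ and apply the operator Cauchy--Schwarz inequality --- in the end just the observation $\sum_{i<j}(A_i-A_j)^2\succeq 0$ --- to obtain
\[
  (\mathbf{f}(x)-\mathbf{f}(y))^2 \;\preceq\; \ell\sum_{i=1}^{\ell}\big(\mathbf{f}(z_{i-1})-\mathbf{f}(z_i)\big)^2 ,
\]
and push this through the path- and $(\pi\otimes\pi)$-expectations using $(\mathbb{E}\mathbf{A})^2\preceq\mathbb{E}[\mathbf{A}^2]$ and, in the conditional steps, Kadison's inequality $\Phi(\mathbf{A})^2\preceq\Phi(\mathbf{A}^2)$ for the unital positive map $\Phi=\mathbb{E}[\,\cdot\mid\mathcal G]$, regrouping the right-hand side edge-by-edge exactly as in the scalar congestion estimate (equivalently, unwinding the recursive variance decomposition over the elements of the random set, as in the unified treatment of \cite{HS19}). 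With the path lengths and congestion bounds of \cite{HS19}, the identical combination of estimates that yields the scalar constant $\tfrac1{2k}$ then yields $\tfrac1{2k}\,\mathbf{Var}[\mathbf{f}]\preceq\mathcal E(\mathbf{f},\mathbf{f})$, with identical numerical constants throughout.

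The step I expect to be delicate --- and the reason this route yields a matrix Poincar\'e inequality but not a matrix modified-log-Sobolev inequality --- is verifying that \emph{every} test-function manipulation in \cite{HS19} is either free of non-commuting cross terms or an instance of operator convexity of $t\mapsto t^2$. For the Poincar\'e inequality the only such manipulations are the law of total variance, $(\mathbb{E}\mathbf{A})^2\preceq\mathbb{E}[\mathbf{A}^2]$ together with its conditional form, and $(\sum_iA_i)^2\preceq m\sum_iA_i^2$ --- all instances of operator convexity of $t^2$, holding with the same constants as in the scalar case --- so the main obstacle is really bookkeeping: confirming that the SCP coupling, the path lengths, and the congestion are imported exactly so that no constant degrades. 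Finally, for the non-homogeneous case I would invoke the reduction of \cite{HS19} that complements/symmetrizes the ground set to replace $\pi$ by a homogeneous SCP measure on a larger ground set, apply the homogeneous bound there, and push the inequality back down; since that reduction only reshapes $\pi$ it carries over unchanged, yielding the stated conclusion with $k=n/2$.
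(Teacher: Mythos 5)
Your guiding observation---that every function-dependent step in the Hermon--Salez proof survives in the Loewner order because it is either linear or an instance of operator convexity of $t\mapsto t^2$ (law of total variance, Jensen under a coupling), while the combinatorial construction of the walk never touches the test function---is exactly the mechanism the paper uses, and the matrix identities you record (total variance, $(\mathbb{E}\mathbf{A})^2\preceq\mathbb{E}[\mathbf{A}^2]$ and its conditional/coupled forms) are the right ones. But the concrete engine you describe is not the one in \cite{HS19}, and the quantitative input you lean on does not exist there. \cite{HS19} do not produce, for generic pairs $x,y\sim\pi\otimes\pi$, random flip--swap paths with ``controlled length and congestion'': their SCP coupling (Lemma~\ref{lem:lem4}) only couples the two conditional laws $\pi_0,\pi_1$ obtained by pinning a single coordinate, supported on pairs differing by one flip or swap. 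The actual proof is the \cite{JSTV04} decomposition run as an induction on $n$: a (matrix) recursive Poincar\'e lemma comparing $Q$ to the two-state projection chain and the two restriction chains---this is where operator Jensen applied to the joint operator convexity of $(\mathbf{X},\mathbf{Y})\mapsto(\mathbf{X}-\mathbf{Y})^2$ enters---gives $\lambda(Q)\ge 1$ for a recursively \emph{constructed} unnormalized generator, and the constant $1/(2k)$ is then produced by bounding the diagonal, $-Q(x,x)\le\Delta(Q_i)+\hat{\pi}(1)$ (resp.\ $+\hat\pi(0)$), and averaging the $n$ generators $Q^{(\ell)}$ over the pinned coordinate $\ell$ so that $\Delta(Q^*)\le 2k$ (using that the variance is generator-free and the Dirichlet form is linear in $Q$), after which one divides by $\Delta$. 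If you insist on the telescoping-plus-congestion route, you would have to construct flows in the flip--swap graph with congestion at most $2k$ relative to this $Q$ yourself; \cite{HS19} supplies no such bounds, so as written the step where $2k$ actually arises is missing. Your parenthetical ``equivalently, unwinding the recursive variance decomposition'' points at the correct argument, but the proposal never executes it: there is no projection/restriction decomposition, no two-state chain computation, no estimate of the coupling quality $\chi$, and no averaging over $\ell$.

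A smaller divergence: the non-homogeneous case with $k=n/2$ is not obtained (in \cite{HS19} or here) by symmetric homogenization. It falls out of the same induction, since without homogeneity one still proves $\Delta(Q)\le n$, whence $\lambda\ge 1/n = 1/(2\cdot n/2)$ after normalizing. Your proposed reduction would additionally need that homogenization preserves the SCP (not established for SCP measures that are not strong Rayleigh), and applying the homogeneous bound on a ground set of $2n$ elements would only yield $1/(2n)$, a factor $2$ short of the claim.
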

The proof of the scalar equivalent of the above statement in \cite{HS19} follows the chain decomposition framework of \cite{JSTV04} which we elaborate on next. Let $Q$ be any reversible Markov generator with respect to some probability distribution $\pi$ on a finite set $\Omega$. Consider any partition of the state space :
\begin{align*}
  \Omega = \bigcup\limits_{i \in \mathcal{I}}\Omega_i
\end{align*}
We define two chains using this partition. Let the \textit{projection chain} induced by this partition be the Markov chain whose state space is $\mathcal{I}$ and whose Markov generator $\hat{Q}$ is defined as follows: for any $i,j \in \Omega$,
\begin{align*}
  \hat{Q}(i,j) = \frac{1}{\hat{\pi}(i)}\sum\limits_{x \in \Omega_i, y \in \Omega_j} \pi(x) Q(x,y) \ \ \text{ where } \hat{\pi}(i) = \sum\limits_{x \in \Omega_i} \pi(x)
\end{align*}
Note that $\hat{\pi}$ is a probability distribution on $\mathcal{I}$ and that it is reversible under $\hat{Q}$,i.e.,
\begin{align*}
  \hat{\pi}(i) \hat{Q}(i,j) =   \hat{\pi}(j) \hat{Q}(j,i)
\end{align*}
We also have, that for each $i \in \mathcal{I}$, the \textit{restriction chain} on $\Omega_i$ is the Markov chain whose state space is $\Omega_i$ and whose Markov geneator $Q_i$ is defined as: for any states $x,y \in \Omega_i$,
\begin{align*}
  Q_i(x,y) = Q(x,y)
  \end{align*}
with the diagonal entries being adjusted so that the rows of $Q_i$ sum to zero. Clearly, then we have \begin{align*}\hat{\pi}_i(x) = \frac{\pi(x)}{\hat{\pi}(i)}\end{align*}
which is reversible under $Q_i$. Now suppose that for each $(i,j) \in \mathcal{I}^2$ with $\hat{Q}_{ij} > 0$, we are given a coupling $\kappa_{ij} : \Omega_i \times \Omega_j \rightarrow [0,1]$ of the probability distribution $\pi_i$ and $\pi_j$,i.e.,
\begin{align*}
  \forall x \in \Omega_i, \ \sum\limits_{y \in \Omega_j}\kappa_{ij}(x,y) = \pi_i(x)\\
  \forall y \in \Omega_j, \ \sum\limits_{x \in \Omega_i}\kappa_{ij}(x,y) = \pi_j(y)
\end{align*}
We will also need the following measure of quality of the coupling
\begin{align*}
  \chi = \min\bigg\{\frac{\pi(x)Q(x,y)}{\hat{\pi}(i)\hat{Q}(i,j)\kappa_{ij}(x,y)}\bigg\}
\end{align*}
where the minimum runs over all $(x,y,i,j)$ such that the denominator is positive.

The main idea of \cite{JSTV04} is to relate the Poincare and (modified) log-Sobolev constants of the original chain to the projection and restriction chains. Given the above setup, we would now like to prove recursive functional inequalities over the restriction and projection chains. Inequalities of this form are the workhorse of all applications of the induction/chain decomposition idea \cite{JSTV04}. Our next lemma establishes that scalar recursive Poincare inequalities can be lifted to matrix Poincare inequalities, using operator convexity properties of the square function. We first need the following proposition.
\begin{proposition}\label{prop:diffOC} The function $f(\mathbf{X},\mathbf{Y}) = (\mathbf{X}-\mathbf{Y})^2$ is jointly operator convex.
\end{proposition}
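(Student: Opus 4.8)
The plan is to reduce the claim to the operator convexity of the scalar square function $t\mapsto t^2$ together with the elementary fact that precomposing an operator convex function with an affine (here linear) map preserves joint operator convexity. Concretely, write $\mathbf{Z}=\mathbf{X}-\mathbf{Y}$; the map $(\mathbf{X},\mathbf{Y})\mapsto\mathbf{Z}$ from $\mathbb{S}_d\times\mathbb{S}_d$ to $\mathbb{S}_d$ is linear, and $f(\mathbf{X},\mathbf{Y})=\mathbf{Z}^2$ is the composition of this map with $g(\mathbf{Z})=\mathbf{Z}^2$. So it suffices to establish that $g$ is operator convex on symmetric matrices and then transport the inequality through the linear substitution.

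First I would record the operator convexity of $g$ via the self-contained algebraic identity: for symmetric $\mathbf{A},\mathbf{B}$ and $\lambda\in[0,1]$,
\[
  \lambda\mathbf{A}^2+(1-\lambda)\mathbf{B}^2-\bigl(\lambda\mathbf{A}+(1-\lambda)\mathbf{B}\bigr)^2=\lambda(1-\lambda)(\mathbf{A}-\mathbf{B})^2\succeq 0,
\]
where positivity holds because $\mathbf{A}-\mathbf{B}$ is symmetric, so its square is positive semidefinite. (One simply expands the left-hand side; the cross terms $\mathbf{A}\mathbf{B}+\mathbf{B}\mathbf{A}$ combine correctly.) Then, given $(\mathbf{X}_1,\mathbf{Y}_1),(\mathbf{X}_2,\mathbf{Y}_2)\in\mathbb{S}_d\times\mathbb{S}_d$ and $\lambda\in[0,1]$, set $\mathbf{A}=\mathbf{X}_1-\mathbf{Y}_1$ and $\mathbf{B}=\mathbf{X}_2-\mathbf{Y}_2$. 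Since subtraction is linear, $\lambda\mathbf{A}+(1-\lambda)\mathbf{B}=\bigl(\lambda\mathbf{X}_1+(1-\lambda)\mathbf{X}_2\bigr)-\bigl(\lambda\mathbf{Y}_1+(1-\lambda)\mathbf{Y}_2\bigr)$, so the displayed identity rearranges to
\[
  f\bigl(\lambda\mathbf{X}_1+(1-\lambda)\mathbf{X}_2,\ \lambda\mathbf{Y}_1+(1-\lambda)\mathbf{Y}_2\bigr)\preceq\lambda f(\mathbf{X}_1,\mathbf{Y}_1)+(1-\lambda)f(\mathbf{X}_2,\mathbf{Y}_2),
\]
which is exactly joint operator convexity.

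There is essentially no obstacle here: the only content is the scalar identity above, and everything else is bookkeeping about affine substitution into an operator convex function. The only point one should be careful about is fixing the definition of joint operator convexity being used (convexity in the Loewner order jointly in the two symmetric-matrix arguments), after which the argument is immediate; one does not even need to invoke operator convexity of $t^2$ as a black box, since the single displayed identity proves it directly.
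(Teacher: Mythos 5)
Your proof is correct and follows essentially the same route as the paper: both arguments use linearity of $(\mathbf{X},\mathbf{Y})\mapsto\mathbf{X}-\mathbf{Y}$ to reduce joint operator convexity of $(\mathbf{X}-\mathbf{Y})^2$ to operator convexity of the square function. The only difference is that you additionally verify operator convexity of $\mathbf{Z}\mapsto\mathbf{Z}^2$ via the identity $\lambda\mathbf{A}^2+(1-\lambda)\mathbf{B}^2-\bigl(\lambda\mathbf{A}+(1-\lambda)\mathbf{B}\bigr)^2=\lambda(1-\lambda)(\mathbf{A}-\mathbf{B})^2\succeq 0$, whereas the paper invokes it as a known fact; your version is slightly more self-contained but otherwise identical in substance.
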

\begin{proof}
For any $t\in [0,1]$, $\mathbf{X}_1,\mathbf{X}_2,\mathbf{Y}_1,\mathbf{Y}_2$, we have
  \begin{align*}
    \big( (t \mathbf{X}_1 + (1-t)\mathbf{X}_2) - (t \mathbf{Y}_1 + (1-t)\mathbf{Y}_2)\big)^2&=\big( t (\mathbf{X}_1-\mathbf{Y}_1) + (1-t)(\mathbf{X}_2 -\mathbf{Y}_2)\big)^2\\
    &\preceq t(\mathbf{X}_1-\mathbf{Y}_1)^2+(1-t)(\mathbf{X}_1-\mathbf{Y}_1)^2
  \end{align*}
  where the last step follows from the operator convexity of the square function applied to $\mathbf{X}_1-\mathbf{Y}_1$ and $\mathbf{X}_2-\mathbf{Y}_2$
\end{proof}

\begin{lemma}{[Recursive Poincare inequalities]}\label{lem:recPI} With the above notation, we have
  \begin{align*}\lambda(Q) \geq \min\big\{\chi\lambda(\hat{Q}),\min\limits_{i \in \mathcal{I}}\lambda(Q_i)\big\}
  \end{align*}
  where the $\lambda(\cdot)$ represent the constant in the matrix valued Poincare inequality for the associated generator.
\end{lemma}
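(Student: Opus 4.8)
The plan is to mimic the scalar chain–decomposition argument of \cite{JSTV04}, replacing each scalar inequality by its operator (Loewner-order) analogue, using \prettyref{prop:diffOC} wherever a convexity step is needed. Fix a matrix valued function $\mathbf{f}:\Omega\to\mathbb{S}_d$. The starting point is the law of total variance in matrix form: writing $\mathbf{f}_i$ for the restriction of $\mathbf{f}$ to $\Omega_i$ and $\bar{\mathbf{f}}(i)=\mathbb{E}_{\pi_i}[\mathbf{f}_i]$ for the conditional expectation (a function on $\mathcal{I}$), one has the identity
\begin{align*}
  \mathbf{Var}_\pi[\mathbf{f}] = \sum_{i\in\mathcal{I}}\hat{\pi}(i)\,\mathbf{Var}_{\pi_i}[\mathbf{f}_i] + \mathbf{Var}_{\hat{\pi}}[\bar{\mathbf{f}}]\,,
\end{align*}
which holds verbatim in the PSD order because it is an exact algebraic identity (expand both sides; the cross terms cancel just as in the scalar case). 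I would first verify this identity carefully, since it is the backbone of the decomposition.

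Next I would bound the two pieces separately by Dirichlet forms. For the restriction term, each $Q_i$ satisfies a matrix Poincaré inequality with constant $\lambda(Q_i)$, so $\lambda(Q_i)\mathbf{Var}_{\pi_i}[\mathbf{f}_i]\preceq \mathcal{E}_{Q_i}(\mathbf{f}_i,\mathbf{f}_i)$; summing against $\hat{\pi}(i)$ and using $Q_i(x,y)=Q(x,y)$ together with $\hat\pi(i)\pi_i(x)=\pi(x)$ gives
\begin{align*}
  \sum_{i}\hat{\pi}(i)\,\lambda(Q_i)\,\mathbf{Var}_{\pi_i}[\mathbf{f}_i]\ \preceq\ \sum_i\sum_{x,y\in\Omega_i}\pi(x)Q(x,y)(\mathbf{f}(x)-\mathbf{f}(y))^2\,,
\end{align*}
which is the part of $\mathcal{E}_Q(\mathbf{f},\mathbf{f})$ coming from intra-block transitions. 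For the projection term, the matrix Poincaré inequality for $\hat Q$ gives $\lambda(\hat Q)\mathbf{Var}_{\hat\pi}[\bar{\mathbf f}]\preceq \mathcal{E}_{\hat Q}(\bar{\mathbf f},\bar{\mathbf f})$, and I must then bound $\mathcal{E}_{\hat Q}(\bar{\mathbf f},\bar{\mathbf f})=\sum_{i,j}\hat\pi(i)\hat Q(i,j)(\bar{\mathbf f}(i)-\bar{\mathbf f}(j))^2$ by the inter-block part of $\mathcal{E}_Q(\mathbf f,\mathbf f)$. This is exactly where the coupling $\kappa_{ij}$ and the quantity $\chi$ enter: write $\bar{\mathbf f}(i)-\bar{\mathbf f}(j)=\sum_{x\in\Omega_i,y\in\Omega_j}\kappa_{ij}(x,y)\big(\mathbf f(x)-\mathbf f(y)\big)$ since $\kappa_{ij}$ is a coupling of $\pi_i$ and $\pi_j$, then apply \prettyref{prop:diffOC} — the joint operator convexity of $(\mathbf X,\mathbf Y)\mapsto(\mathbf X-\mathbf Y)^2$ — to pull the square inside the convex combination:
\begin{align*}
  (\bar{\mathbf f}(i)-\bar{\mathbf f}(j))^2\ \preceq\ \sum_{x\in\Omega_i,y\in\Omega_j}\kappa_{ij}(x,y)\,(\mathbf f(x)-\mathbf f(y))^2\,.
\end{align*}
Multiplying by $\hat\pi(i)\hat Q(i,j)$, summing, and using the definition of $\chi$ — namely $\hat\pi(i)\hat Q(i,j)\kappa_{ij}(x,y)\le \tfrac1\chi\,\pi(x)Q(x,y)$ — bounds this by $\tfrac1\chi$ times the inter-block part of $\mathcal{E}_Q(\mathbf f,\mathbf f)$.

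Finally I would combine: set $\lambda^*=\min\{\chi\lambda(\hat Q),\min_i\lambda(Q_i)\}$. Multiplying the total-variance identity through, $\lambda^*\mathbf{Var}_\pi[\mathbf f] = \sum_i\hat\pi(i)\lambda^*\mathbf{Var}_{\pi_i}[\mathbf f_i] + \lambda^*\mathbf{Var}_{\hat\pi}[\bar{\mathbf f}]$; bound the first sum using $\lambda^*\le\lambda(Q_i)$ and the restriction estimate, and the second using $\lambda^*\le\chi\lambda(\hat Q)$ together with the projection estimate (the extra $\chi$ absorbs the $\tfrac1\chi$ loss). The two resulting terms are precisely the intra-block and inter-block parts of $\mathcal{E}_Q(\mathbf f,\mathbf f)$, which sum to $\mathcal{E}_Q(\mathbf f,\mathbf f)$, yielding $\lambda^*\mathbf{Var}_\pi[\mathbf f]\preceq\mathcal{E}_Q(\mathbf f,\mathbf f)$ and hence $\lambda(Q)\ge\lambda^*$. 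The main obstacle, and the only place where matrix-ness does real work rather than being a formal decoration, is the projection-term step: in the scalar proof one uses Jensen/Cauchy–Schwarz to move the square inside the coupling average, and the content here is that \prettyref{prop:diffOC} is exactly the right operator-convexity statement to make that step survive in the Loewner order — everything else is the scalar argument with inequalities read as PSD inequalities. I should also double-check that $\bar{\mathbf f}$, as the conditional expectation, is genuinely the minimizer making the total-variance identity split cleanly, and that no step secretly requires commuting $\mathbf f(x)$ with $\mathbf f(y)$ (it does not, since $(\mathbf X-\mathbf Y)^2$ is handled as a whole).
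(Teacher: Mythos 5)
Your proposal is correct and follows essentially the same route as the paper: the matrix law of total variance, the coupling representation of $\hat{\mathbf{F}}(i)-\hat{\mathbf{F}}(j)$ combined with the joint operator convexity of $(\mathbf{X},\mathbf{Y})\mapsto(\mathbf{X}-\mathbf{Y})^2$ (via operator Jensen) to dominate the projection Dirichlet form by $\tfrac{1}{\chi}$ times the inter-block part of $\mathcal{E}_Q$, and the final term-by-term comparison giving $\lambda(Q)\ge\min\{\chi\lambda(\hat Q),\min_i\lambda(Q_i)\}$. The only difference is cosmetic bookkeeping in how the minimum is assembled at the end.
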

\begin{proof}
  Let $\mathbf{F}:\Omega \rightarrow \mathbb{S}_d$. The Dirichlet form can be written as
  \begin{align*}
    \mathcal{E}(\mathbf{F},\mathbf{F}) = \frac{1}{2}\sum\limits_{x,y\in\Omega}\pi(x)Q(x,y)(\mathbf{F(x)}-\mathbf{F(y)})^2
\end{align*}
We can also define the projection of a function $\hat{\mathbf{F}}:\mathcal{I}\rightarrow \mathbb{S}_d$ by $\hat{\mathbf{F}}(i)=\mathbb{E}_{\pi_i}[\mathbf{F}]$. Hence, we can also decompose the variance and Dirichlet form of a matrix valued function as follows:
\begin{align*}
  \mathbf{Var}_\pi[\mathbf{F}]&=\sum\limits_{i \in \mathcal{I}}\hat{\pi}(i)\mathbf{Var}_{\pi_i}[\mathbf{F}] + \mathbf{Var}_{\hat{\pi}}[\hat{\mathbf{F}}]\\
  \mathcal{E}_\pi(\mathbf{F},\mathbf{F}) &= \sum\limits_{i \in \mathcal{I}}\hat{\pi}(i)\mathcal{E}_{\pi_i}(\mathbf{F},\mathbf{F})+\frac{1}{2}\sum\limits_{i \neq j}\sum\limits_{x\in\Omega_i, y \in \Omega_j}\pi(x)Q(x,y)(\mathbf{F(x)}-\mathbf{F(y)})^2
\end{align*}
We would now like to make a term-by-term comparison between the right hand sides of the above two equations. In order to do this, we would like to replace the second term in the Dirichlet form equation by the Dirichlet form for $\mathcal{E}_{\hat{\pi}}(\mathbf{\hat{F}},\mathbf{\hat{F}})$. This is where the coupling is useful. Consider any pair $(i,j)\in\mathcal{I}$ such that $\hat{Q})i,j$>0
. Since $\kappa_{ij}$ is a coupling of $\pi_i$ and $\pi_j$, we have
\begin{align*}
  \mathbf{\hat{F}}(i) &= \sum\limits_{x\in\Omega_i,y\in\Omega_j}\kappa_{ij}(x,y)\mathbf{F}(x)\\
  \mathbf{\hat{F}}(j) &= \sum\limits_{x\in\Omega_i,y\in\Omega_j}\kappa_{ij}(x,y)\mathbf{F}(y)\\
\end{align*}
We can now use Lemma \ref{prop:diffOC} and operator Jensen's inequality (Lemma \ref{lem:jensen}) to conclude that
\begin{align*}
  \sum\limits_{x\in\Omega_i, y\in\Omega_j}\kappa_{ij}(x,y)(\mathbf{F(x)}-\mathbf{F(y)})^2 \succeq (\mathbf{\hat{F}(i)}-\mathbf{\hat{F}(j)})
\end{align*}
Multiplying the above equation by $\chi\hat{\pi}(i)\hat{Q}(i,j)$ we have
\begin{align*}
  \sum\limits_{x\in\Omega_i,y\in\Omega_j}\pi(x)Q(x,y)(\mathbf{F(x)}-\mathbf{F(y)})^2 \succeq \chi \hat{\pi}(i)\hat{Q}(i,j)(\mathbf{\hat{F}(i)}-\mathbf{\hat{F}(j)})^2
\end{align*}
It is easy to see that the above equation also holds when $\hat{Q}(i,j)=0$ rather than just when $\hat{Q}(i,j)=0$. Now, summing up over all distinct $i,j\in \mathcal{I}$, we get that
\begin{align*}
  \frac{1}{2}\sum\limits_{i \neq j}\sum\limits_{x\in\Omega_i, y \in \Omega_j}\pi(x)Q(x,y)(\mathbf{F(x)}-\mathbf{F(y)})^2&\succeq\chi \mathcal{E}_{\hat{\pi}}[\mathbf{\hat{F}},\mathbf{\hat{F}}]\\
  \implies \mathcal{E}_\pi[\mathbf{F},\mathbf{F}]\succeq \sum\limits_{i \in \mathcal{I}}\hat{\pi}(i)\mathcal{E}_{\pi_i}[\mathbf{F},\mathbf{F}]+\chi\mathcal{E}_{\hat{\pi}}[\mathbf{\hat{F}},\mathbf{\hat{F}}]
  \end{align*}
Comparing this with the Variance decomposition, we get
  \begin{align*}\begin{cases}
  \mathcal{E_{\hat{\pi}}}[\mathbf{\hat{F}},\mathbf{\hat{F}}]&\succeq \hat{\kappa}\mathbf{Var}_{\hat{\pi}}[\mathbf{\hat{F]}}\\
  \mathcal{E}_{\pi_i}[\mathbf{F},\mathbf{F}]&\succeq \kappa_{i}\mathbf{Var}_{\pi_{i}}[\mathbf{F}]\forall i\in\mathcal{I},
  \end{cases}\implies\mathcal{E}_{\pi}[\mathbf{F}]\succeq\min\{\chi\hat{\kappa},\min_{i\in\mathcal{{I}}}\kappa_{i}\}\mathbf{Var}_{\pi}[\mathbf{F}]
\end{align*}
Since we did not assume anything about our function $\mathbf{F}$, we get the desired conclusion.
\end{proof}

We will also need the following estimate on the quality of the coupling. Since the proof does not depend on the function (and hence there is no difference in the scalar and matrix valued case), we present the lemma without proof.
\begin{lemma}{[Crude lower-bound on $\chi$, Lemma 2 in \cite{HS19}]} We always have
  \begin{align*}
    \frac{\pi(x) Q(x,y)}{\hat{\pi}(i) \hat{Q}(i,j)\kappa_{ij}(x,y)}\geq \max\bigg\{\frac{Q(x,y)}{\hat{Q}(i,j)},\frac{Q(y,x)}{\hat{Q}(j,i)}\bigg\}
  \end{align*}
\end{lemma}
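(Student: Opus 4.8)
The plan is to deduce the inequality directly from the two marginal identities satisfied by the coupling $\kappa_{ij}$, combined with the reversibility of $Q$ under $\pi$ and of $\hat Q$ under $\hat\pi$. Fix a tuple $(x,y,i,j)$ with $x\in\Omega_i$, $y\in\Omega_j$, and $\hat\pi(i)\hat Q(i,j)\kappa_{ij}(x,y)>0$, so that the left-hand side of the claimed bound is well defined, and show that it dominates each of the two ratios on the right separately.

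First I would establish the bound by $Q(x,y)/\hat Q(i,j)$. Since $\kappa_{ij}$ is a coupling of $\pi_i$ and $\pi_j$, summing $\kappa_{ij}(x,\cdot)$ over $\Omega_j$ gives $\sum_{y'\in\Omega_j}\kappa_{ij}(x,y')=\pi_i(x)=\pi(x)/\hat\pi(i)$; in particular $\kappa_{ij}(x,y)\le \pi(x)/\hat\pi(i)$, i.e.\ $\hat\pi(i)\,\kappa_{ij}(x,y)\le \pi(x)$. Substituting this bound into the denominator of the left-hand side and cancelling the common factor $Q(x,y)$ yields $\frac{\pi(x)Q(x,y)}{\hat\pi(i)\hat Q(i,j)\kappa_{ij}(x,y)}\ge \frac{Q(x,y)}{\hat Q(i,j)}$.

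For the second term I would first rewrite the fraction using reversibility of both chains: the local balance equations for $Q$ give $\pi(x)Q(x,y)=\pi(y)Q(y,x)$, and the reversibility of $\hat Q$ under $\hat\pi$ gives $\hat\pi(i)\hat Q(i,j)=\hat\pi(j)\hat Q(j,i)$, so the left-hand side equals $\frac{\pi(y)Q(y,x)}{\hat\pi(j)\hat Q(j,i)\kappa_{ij}(x,y)}$. Now the other marginal of the coupling, $\sum_{x'\in\Omega_i}\kappa_{ij}(x',y)=\pi_j(y)=\pi(y)/\hat\pi(j)$, gives $\hat\pi(j)\,\kappa_{ij}(x,y)\le\pi(y)$, and cancelling $Q(y,x)$ as before yields $\frac{\pi(y)Q(y,x)}{\hat\pi(j)\hat Q(j,i)\kappa_{ij}(x,y)}\ge\frac{Q(y,x)}{\hat Q(j,i)}$. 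Taking the maximum of the two lower bounds completes the proof.

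Since each step is an elementary algebraic manipulation, there is no genuine obstacle here; the only points requiring care are to pick the correct marginal of $\kappa_{ij}$ for each of the two terms, and to invoke the reversibility of the projected chain $\hat Q$ (not merely of $Q$) for the second one. This is precisely why, as the surrounding discussion notes, the estimate is identical in the scalar and matrix-valued settings: no property of the function $\mathbf{F}$ enters the argument at all.
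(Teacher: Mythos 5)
Your proof is correct, and it is exactly the standard argument for this estimate: the paper itself omits the proof (deferring to Lemma~2 of \cite{HS19}), and your two-sided use of the coupling marginals $\kappa_{ij}(x,y)\le\pi_i(x)$ and $\kappa_{ij}(x,y)\le\pi_j(y)$ together with reversibility of $Q$ under $\pi$ and of $\hat Q$ under $\hat\pi$ is precisely how that cited proof goes. Nothing is missing; in particular your observation that no property of $\mathbf{F}$ enters is exactly why the paper could state the lemma without proof in the matrix-valued setting.
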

\subsection{Proof of Theorem \ref{thm:mpc}}
To prove Theorem \ref{thm:mpc}, we will first need to use the inductive framework to prove some functional-analytic estimates for SCP measures.
  \begin{lemma}\label{lem:2stPI} Given a generator $\hat{Q}$ for a two state $(\{ 0,1\})$ chain. We have for any $\mathbf{F}:\{ 0,1\}\rightarrow \mathbb{S}_d$,
    \begin{align*}
      (a+b)\mathbf{Var}[\mathbf{F}]\preceq \mathcal{E}_{\hat{Q}}[\mathbf{F},\mathbf{F}]
    \end{align*}
  \end{lemma}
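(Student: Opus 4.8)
The plan is to exploit that for a two-state chain every object involved is a scalar multiple of the single matrix $(\mathbf F(0)-\mathbf F(1))^2$, so the claimed inequality is in fact an identity. Write $a=\hat Q(0,1)$ and $b=\hat Q(1,0)$ for the two off-diagonal rates (so $\hat Q(0,0)=-a$, $\hat Q(1,1)=-b$), and let $\pi$ denote the stationary distribution of $\hat Q$. From $\pi\hat Q=0$, equivalently from reversibility $\pi(0)\hat Q(0,1)=\pi(1)\hat Q(1,0)$ together with $\pi(0)+\pi(1)=1$, one gets $\pi(0)=b/(a+b)$ and $\pi(1)=a/(a+b)$. Abbreviate $\mathbf A=\mathbf F(0)$, $\mathbf B=\mathbf F(1)$ (both in $\mathbb S_d$), and set $p=\pi(0)$, $q=\pi(1)$.

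First I would compute the variance directly from $\mathbf{Var}_\pi[\mathbf F]=\mathbb E_\pi[\mathbf F^2]-(\mathbb E_\pi[\mathbf F])^2$. Expanding, $\mathbb E_\pi[\mathbf F^2]=p\mathbf A^2+q\mathbf B^2$ and $(\mathbb E_\pi[\mathbf F])^2=(p\mathbf A+q\mathbf B)^2=p^2\mathbf A^2+pq(\mathbf A\mathbf B+\mathbf B\mathbf A)+q^2\mathbf B^2$, so, using $p-p^2=pq=q-q^2$,
\[
\mathbf{Var}_\pi[\mathbf F]=pq\bigl(\mathbf A^2-\mathbf A\mathbf B-\mathbf B\mathbf A+\mathbf B^2\bigr)=pq\,(\mathbf A-\mathbf B)^2 .
\]
The only place non-commutativity could interfere is that the cross term is $\mathbf A\mathbf B+\mathbf B\mathbf A$ rather than $2\mathbf A\mathbf B$; but this is precisely the cross term in the expansion of $(\mathbf A-\mathbf B)^2$, so the last equality is a genuine identity of symmetric matrices, and in particular the right-hand side is automatically $\succeq 0$.

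Next I would compute the Dirichlet form: in $\mathcal E_{\hat Q}[\mathbf F,\mathbf F]=\tfrac12\sum_{x,y}\pi(x)\hat Q(x,y)(\mathbf F(x)-\mathbf F(y))^2$ only the pairs $(0,1)$ and $(1,0)$ contribute, giving $\tfrac12\bigl(\pi(0)a+\pi(1)b\bigr)(\mathbf A-\mathbf B)^2$, and since $\pi(0)a=\tfrac{ab}{a+b}=\pi(1)b$ this is $\tfrac{ab}{a+b}(\mathbf A-\mathbf B)^2$. Comparing with $\mathbf{Var}_\pi[\mathbf F]=pq\,(\mathbf A-\mathbf B)^2=\tfrac{ab}{(a+b)^2}(\mathbf A-\mathbf B)^2$ yields $\mathcal E_{\hat Q}[\mathbf F,\mathbf F]=(a+b)\,\mathbf{Var}_\pi[\mathbf F]$, which is even stronger than the asserted $\preceq$ (if one uses the Dirichlet form without the factor $\tfrac12$, the same computation gives the inequality up to that harmless constant). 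There is essentially no obstacle here — the two-point case collapses to the elementary matrix identity above; the only things to watch are the bookkeeping of the stationary weights and the observation that the non-commutative cross terms recombine into an honest square.
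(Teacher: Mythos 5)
Your proposal is correct and follows essentially the same route as the paper: compute the stationary distribution $\pi(0)=b/(a+b)$, $\pi(1)=a/(a+b)$, then verify directly that $\mathbf{Var}_\pi[\mathbf F]=\tfrac{ab}{(a+b)^2}(\mathbf F(0)-\mathbf F(1))^2$ and $\mathcal E_{\hat Q}[\mathbf F,\mathbf F]=\tfrac{ab}{a+b}(\mathbf F(0)-\mathbf F(1))^2$, so the inequality holds with equality and constant $a+b$. Your explicit remark that the non-commutative cross terms recombine into the square $(\mathbf F(0)-\mathbf F(1))^2$, and your care with the factor $\tfrac12$ in the Dirichlet form, are welcome clarifications but not a different argument.
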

  \begin{proof}
    It is easy to prove that for a Markov chain with generator Q, the stationary distribution $\pi$ satisfies $\pi(1)=\frac{a}{a+b}$ and $\pi(0)=\frac{b}{a+b}$.
    Now we can focus on understanding the Dirichlet form and variance of any matrix valued function $\mathbf{F}$ .
    \begin{align*}
      \mathcal{E}[\mathbf{F},\mathbf{F}] &= \frac{b}{a+b}\cdot a(\mathbf{F}(0)-\mathbf{F}(1))^2\\
      \mathbf{Var}[\mathbf{F}] &= \mathbb{E}[\mathbf{F}^2] - (\mathbb{E}[\mathbf{F}])^2\\
      &=\frac{1}{a+b}(b\mathbf{F}(0)^2 + a\mathbf{F}(1)^2) - \frac{1}{(a+b)^2}(b\mathbf{F}(0) + a\mathbf[F](1))^2\\
      &=\frac{ab}{(a+b)^2}(\mathbf{F}(0)-\mathbf{F}(1))^2
    \end{align*}
    which concludes that the Poincare constant is $(a+b)$
  \end{proof}
We will also need the following lemma which characterizes the quality of the couplings for SCP measures.
\begin{lemma}{[Exploiting the SCP, Lemma 4 in \cite{HS19}]}\label{lem:lem4} If $\pi$ has the SCP, then there is a coupling $\kappa$ of $\pi_0, \pi_1$ supported on
  \begin{align*}
    \{(x,y)\in \Omega_0 \times \Omega_1 : x \sim y\}
  \end{align*}
  In particular, taking $\kappa_{01}$ and $\kappa_{10}$ to be $\kappa$ and its transpose respectively, we obtain
  \begin{align*}
    \chi \geq \frac{\overline{\mathscr{M}}(Q)}{\max\{a,b\}}
  \end{align*}
  where $\overline{\mathscr{M}}(Q) = \min\limits_{\substack{x,y\in\Omega\\x\sim y}}\max\{Q(x,y),Q(y,x)\}$
\end{lemma}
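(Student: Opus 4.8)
The statement to be proved, Lemma~\ref{lem:lem4}, concerns only the measure $\pi$ and the generator $Q$ and not any test function, so the argument is exactly the one for the scalar case in \cite{HS19}; the plan is to transcribe it. The partition in force is $\Omega = \Omega_0 \cup \Omega_1$ with $\Omega_c = \{x \in \Omega : x_r = c\}$ for a fixed coordinate $r \in [n]$, so that $\mathcal{I} = \{0,1\}$, $\hat\pi(c) = \pi(X_r = c)$, and $\pi_c = \pi(\,\cdot \mid X_r = c)$ is regarded as a measure on $\{0,1\}^{[n]\setminus\{r\}}$; write $a = \hat Q(0,1)$ and $b = \hat Q(1,0)$ for the jump rates of the resulting two-state projection chain (the one to which Lemma~\ref{lem:2stPI} applies).

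First I would extract the coupling from the SCP. Applying the SCP with $S = \{r\}$ to the pair $x = e_r$, $y = 0$ in $\{0,1\}^S$ (for which $x \triangleright y$) gives $\pi(X_{[n]\setminus\{r\}} = \cdot \mid X_r = 0) \triangleright \pi(X_{[n]\setminus\{r\}} = \cdot \mid X_r = 1)$, i.e.\ $\pi_0 \triangleright \pi_1$. By the definition of $\triangleright$ on measures, this produces a coupling $\kappa$ with first marginal $\pi_0$ and second marginal $\pi_1$ supported on pairs $(w,w')$ with $w \triangleright w'$, where $w,w' \in \{0,1\}^{[n]\setminus\{r\}}$. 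Next I would translate $\triangleright$ into $\sim$: identify $w$ with the state $x \in \Omega_0$ having $x_r = 0$ and agreeing with $w$ off $r$, and $w'$ with $y \in \Omega_1$ having $y_r = 1$. If $w = w'$, then $y = x + e_r$, a flip; if $w = w' + e_i$ for some $i \neq r$, then $y = x + e_r - e_i$, a swap; in both cases $x \sim y$. Hence $\kappa$, viewed as a coupling of $\pi_0$ on $\Omega_0$ and $\pi_1$ on $\Omega_1$, is supported on $\{(x,y) \in \Omega_0 \times \Omega_1 : x \sim y\}$, which is the first assertion; set $\kappa_{01} = \kappa$ and $\kappa_{10}(x,y) = \kappa(y,x)$.

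For the bound on $\chi$, observe that the minimum defining $\chi$ ranges over tuples $(x,y,i,j)$ with $\hat\pi(i)\hat Q(i,j)\kappa_{ij}(x,y) > 0$, which forces $\{i,j\} = \{0,1\}$ (the diagonal rates $\hat Q(c,c)$ are negative) and, by the previous step, $x \sim y$. For every such tuple the crude lower bound (the preceding lemma, Lemma~2 of \cite{HS19}) gives $\pi(x)Q(x,y) / (\hat\pi(i)\hat Q(i,j)\kappa_{ij}(x,y)) \geq \max\{Q(x,y)/\hat Q(i,j),\, Q(y,x)/\hat Q(j,i)\} \geq \max\{Q(x,y), Q(y,x)\}/\max\{a,b\} \geq \overline{\mathscr{M}}(Q)/\max\{a,b\}$, the last inequality using $x \sim y$ and the definition of $\overline{\mathscr{M}}(Q)$; taking the minimum yields $\chi \geq \overline{\mathscr{M}}(Q)/\max\{a,b\}$. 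The only step carrying real content is the translation in the middle paragraph — in particular, tracking that the frozen coordinate $r$ takes opposite values on $\Omega_0$ and $\Omega_1$, so that a single raised coordinate in the covering relation becomes a two-coordinate swap once $r$ is restored; the rest is a direct appeal to the SCP definition and to the already-stated crude estimate, so no genuine obstacle is expected.
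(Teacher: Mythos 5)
Your proposal is correct: the SCP applied to the single frozen coordinate yields $\pi_0\triangleright\pi_1$, the covering relation on $\{0,1\}^{[n]\setminus\{\ell\}}$ becomes flip/swap adjacency once the frozen coordinate is restored, and the crude bound together with $\hat Q(i,j)\le\max\{a,b\}$ and the definition of $\overline{\mathscr{M}}(Q)$ gives the stated bound on $\chi$. The paper itself states this lemma without proof, citing Lemma~4 of \cite{HS19}, and your argument is essentially a faithful reconstruction of that cited proof, so there is nothing to flag.
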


We will now prove theorem \ref{thm:mpc}.
\begin{proof}{[of Theorem \ref{thm:mpc}]} The proof is unchanged from the proof in \cite{HS19}. We will induct on the number of elements $n$. We will change the normalization of the generator : For any distribution $\pi$ on $\{0,1\}^n$ with the SCP, our goal is to prove the existence of a flip-swap walk $Q$ for $\pi$ such that $\lambda(Q)\geq 1$ and $\Delta(Q)\leq \begin{cases}
n& \text{always}\\
2k&\text{ if } \pi \text{ is k-homogenous}
\end{cases}$
Once we have established this, after dividing $Q$ by $\Delta(Q)$, we will be done.
As before, take a coordinate $\ell\in[n]$ and define the sets $\Omega_0 = \{x\in\Omega : x_\ell =0\}$ and $\Omega_1 = \{x\in\Omega : x_\ell = 1\}$ are both non-empty. We now consider the projection and restriction chains. Let their stationary distributions be $\hat{\pi},\pi_0,\pi_1$ respectively. We can of course view $\pi_0,\pi_1$ as distributions on $\{ 0,1 \}^{n-1} $ and both distributions are SCP. If $\pi$ is $k$-homogenous, then $\pi_0$ is also $k$-homogenous while $\pi_1$ is $(k-1)$-homogenous. Thus, using our induction hypothesis, we can say that there exists a flip-swap walk generator $Q_i$ for $\pi_i$ (for i=0,1) satisfying $\lambda(Q_i)\geq 1$ and $\Delta(Q_i)\leq \begin{cases}
n-1& \text{always}\\
2(k-i)&\text{ if } \pi \text{ is k-homogenous}
\end{cases}$
We also have, from Lemma \ref{lem:lem4}, a coupling $\kappa$ of $\pi_0$ and $\pi_1$ supported on $\{x\in\Omega_0,y\in\Omega_1 : x\sim y\}$.
\end{proof}
Hermon and Salez \cite{HS19} define the following flip-swap walk $Q$ for $\pi$ as follows : for $x\neq y \in\Omega$,
$Q(x,y)=\begin{cases}
Q_0(x,y) & \text{if } (x,y) \in \Omega_0\times\Omega_0\\
Q_1(x,y) & \text{if } (x,y) \in \Omega_1\times\Omega_1\\
\frac{\hat{\pi}(0)\hat{\pi}(1)\kappa(x,y)}{\pi(x)} & \text{if } (x,y) \in \Omega_0\times\Omega_1\\
\frac{\hat{\pi}(0)\hat{\pi}(1)\kappa(y,x)}{\pi(y)} & \text{if } (x,y) \in \Omega_1\times\Omega_0
\end{cases}$
and the diagonal is adjusted to make the row sums 0.
We will now use Lemma \ref{lem:recPI} with $\Omega_0,\Omega_1$ as our partition and the couplings $\kappa_{01}$ being $\kappa$ and $\kappa{10}$ being transpose of $\kappa$. We also have that the projection generator $\hat{Q}$ satisfies $\lambda(\hat{Q})\geq \hat{Q}(0,1)+\hat{Q}(1,0)$ from Lemma \ref{lem:2stPI}. We also have that for any $x\in\Omega_0, y\in\Omega_1$ with $\kappa(x,y)>0$,
\begin{align*}
  \frac{\pi(x)Q(x,y)}{\hat{\pi}(0)\hat{Q}(0,1)\kappa_{01}(x,y)} = \frac{\hat{\pi}(1)}{\hat{Q}(0,1)} =\frac{\hat{\pi}(0)}{\hat{Q}(1,0)} = \frac{\pi(y)Q(y,x)}{\hat{\pi}(1)\hat{Q}(1,0)\kappa_{01}(y,x)}
  \end{align*}
  where the second equality follows from reversibility. Hence, we get
  \begin{align*}
    \chi = \frac{\hat{\pi}(0)}{\hat{Q}(1,0)}=\frac{\hat{\pi}(1)}{\hat{Q}(0,1)}
  \end{align*}
  Finally combining the two inequalities above, we get
  \begin{align*}
    \chi\lambda(\hat{Q})\geq \hat{\pi}(0)+\hat{\pi}(1) = 1
  \end{align*}
  From our induction hypothesis, we had $\lambda(Q_i)\geq 1$, so we can use Lemma \ref{lem:recPI} to get that $\lambda(Q)\geq 1$. We just need to bound the diagonal of $Q$ now. For $x \in \Omega_0$, we have by construction
  \begin{align*}
    -Q(x,x)=\sum\limits_{y \in \Omega_0 \diagdown x}Q_0(x,y) + \frac{\hat{\pi}(0)\hat{\pi}(1)}{\pi(x)}\sum\limits_{y\in\Omega_1}\kappa(x,y)
  \end{align*}
Using the definition of $\Delta(Q_0)$ and the fact that the first marginal of $\kappa$ is $\pi_0$, we deduce that
\begin{align*}
  -Q(x,x) \leq \Delta(Q_0) + \hat{\pi}(1) = \Delta(Q_0)+\mathbb{E}_{\pi}[X_\ell]
\end{align*}
Similarly, if $x\in\Omega_1$, we have
\begin{align*}
  -Q(x,x) \leq \Delta(Q_1)+\hat{\pi}(0)\leq\Delta(Q_1)+x_\ell
\end{align*}
as $x_\ell=1$. So using the induction hypothesis on $\Delta(Q_i)$, we get that for all $x\in\Omega$,
\begin{align*}\Delta(Q)\leq \begin{cases}
n& \text{always}\\
2k+\mathbb{E}_\pi[X_\ell]-x_\ell&\text{ if } \pi \text{ is k-homogenous}
\end{cases}\end{align*}
Finally to bound the second line by $2k$, we average over the coordinate $\ell$. The generator will be written as $Q^{(\ell)}$ to denote the dependence on $\ell$.For each $ell\in[n]$, the construction of our generator produces a flip-swap walk $Q^{(\ell)}$ for $\pi$ which satisfies $\lambda(Q^{(\ell)})\geq 1$ and $\Delta(Q^{(\ell)})\leq n$. Now we crucially notice that the matrix valued variance is unchanged under averaging (because it doesnt depend on the generator) and the Dirchlet form has a linear dependence on the generator $Q^{(\ell)}$. Hence, $\lambda(Q)$ and $\Delta(Q)$ is preserved under convex combinations. So we can define an average generator $Q^*=\frac{1}{n}\sum\limits_{\ell=1}^{n}Q^{(\ell)}$ automatically satisfies $\lambda(Q^*)\geq 1$ and $\Delta(Q^*)\leq n$. If $\pi$ is $k$-homogenous, then for every $x\in\Omega$, we get
\begin{align*}
  -Q^*(x,x)=-\frac{1}{n}\sum\limits_{\ell=1}^{n}Q^{(\ell)}(x,x) \leq \frac{1}{n}\sum\limits_{\ell=1}^{n}(2k + \mathbb{E}[X_\ell] - x_\ell) = 2k
\end{align*}
Thus $Q^{*}$ satisfies our desired properties and we are done.

\section{A Bernstein Inequality from Matrix valued Poincare inequalities}
In this section we show how to use Matrix valued Poincare inequalities to obtain matrix Bernstein style inequalities.

We will need a well-known monotonicity property of the trace exponential \cite{TroppIntro15}
\begin{lemma}
  \label{lem:trMonotone}
  For any monotone function $f$, for any Hermitian matrices $\bf{A}$ and $\bf{H}$ such that $\bf{A} \preceq \bf{H}$, then $\Tr\big[f(\mathbf{A})\big] \leq \Tr\big[ f(\mathbf{H})\big]$
\end{lemma}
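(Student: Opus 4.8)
The plan is to reduce this matrix statement to a coordinate-wise comparison of eigenvalues. List the (real) eigenvalues of $\mathbf{A}$ in decreasing order as $\lambda_1(\mathbf{A}) \geq \cdots \geq \lambda_d(\mathbf{A})$, and similarly for $\mathbf{H}$. Since a scalar function $f$ acts on a Hermitian matrix through its spectral decomposition, $\Tr\big[f(\mathbf{A})\big] = \sum_{k=1}^d f(\lambda_k(\mathbf{A}))$ and likewise for $\mathbf{H}$. So it suffices to prove the termwise eigenvalue bound $\lambda_k(\mathbf{A}) \leq \lambda_k(\mathbf{H})$ for every $k$; then monotonicity (non-decreasingness) of $f$ gives $f(\lambda_k(\mathbf{A})) \leq f(\lambda_k(\mathbf{H}))$, and summing over $k$ finishes the proof.

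The one real step is the eigenvalue comparison, which is Weyl's monotonicity principle. I would invoke the Courant–Fischer min–max characterization $\lambda_k(\mathbf{A}) = \max_{\substack{V \subseteq \mathbb{C}^d \\ \dim V = k}}\; \min_{\substack{v \in V \\ \|v\| = 1}} v^* \mathbf{A} v$. Because $\mathbf{A} \preceq \mathbf{H}$ means exactly that $v^*\mathbf{A}v \leq v^*\mathbf{H}v$ for every vector $v$, each inner minimum computed with $\mathbf{A}$ is at most the corresponding one computed with $\mathbf{H}$, and taking the maximum over all $k$-dimensional subspaces preserves the inequality; hence $\lambda_k(\mathbf{A}) \leq \lambda_k(\mathbf{H})$.

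Putting the pieces together: $\Tr\big[f(\mathbf{A})\big] = \sum_{k=1}^d f(\lambda_k(\mathbf{A})) \leq \sum_{k=1}^d f(\lambda_k(\mathbf{H})) = \Tr\big[f(\mathbf{H})\big]$, where the middle inequality applies $f$ termwise. There is no genuine obstacle here; the only points that need care are (i) sorting the eigenvalues of $\mathbf{A}$ and of $\mathbf{H}$ in the same order so the termwise comparison is legitimate, and (ii) noting that ``monotone'' in the statement must mean non-decreasing (for non-increasing $f$ the inequality reverses).

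If one prefers to avoid min–max, an alternative is to first assume $f$ is continuously differentiable and write $\Tr\big[f(\mathbf{H})\big] - \Tr\big[f(\mathbf{A})\big] = \int_0^1 \Tr\big[f'(\mathbf{A} + t(\mathbf{H}-\mathbf{A}))\,(\mathbf{H}-\mathbf{A})\big]\,dt$, which is nonnegative since $f' \geq 0$ makes $f'(\cdot)$ positive semidefinite, $\mathbf{H}-\mathbf{A}\succeq 0$, and the trace of a product of two positive semidefinite matrices is nonnegative; the general monotone case then follows by approximation. I would present the eigenvalue argument as the main proof, since it is shorter and requires no smoothness.
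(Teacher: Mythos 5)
Your proof is correct; the paper itself does not prove this lemma but simply quotes it as a standard fact from the cited monograph of Tropp, and your argument (Weyl's monotonicity principle via Courant--Fischer, then applying the nondecreasing $f$ termwise to the ordered eigenvalues and summing) is precisely the standard proof given there. Your parenthetical caveats --- matching the orderings of the two spectra and reading ``monotone'' as nondecreasing --- are the right ones, and nothing further is needed.
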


The next lemma characterizes Jensen's inequality for matrices. The first statement about operator Jensen's inequality for matrices can be found in \cite{TroppIntro15}, while the Jensen inequality for the trace of a convex function is from \cite{HP03}
\begin{lemma}\label{lem:jensen} Given a decomposition of the identity of the form $\sum\limits_{i=1}^{n}\mathbf{K}_i^{*}\mathbf{K}_i = \mathbf{I}$ and an operator convex function $f$, we have for any set of Hermitian matrices $A_1,\ldots,A_n$,
  \begin{align*}
    f\bigg(\sum\limits_{i}^{n}\mathbf{K}_i^{*} \mathbf{A}_i\mathbf{K}_i\bigg) \preceq \sum\limits_{i}^{n}\mathbf{K}_i^{*}f(\mathbf{A}_i)\mathbf{K}_i
  \end{align*}
  For any convex function, we also have
  \begin{align*}
    \mathsf{Tr}\bigg[f\big(\sum\limits_{i}^{n}\mathbf{K}_i^{*}\mathbf{A}_i \mathbf{K}_i\big)\bigg]\leq \mathsf{Tr}\bigg[\sum\limits_{i=1}^{n}\mathbf{K}_i^{*}f(\mathbf({A}_i)\mathbf{K}_i\bigg]
  \end{align*}
\end{lemma}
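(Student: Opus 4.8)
The two displayed inequalities are the Hansen--Pedersen form of Jensen's operator inequality (operator-convex case) and its trace companion (ordinary-convex case), so the plan is to reproduce the standard dilation proof. The unifying first step is to \emph{stack}: let $\mathbf{V}$ be the $nd\times d$ matrix whose $i$-th $d\times d$ block is $\mathbf{K}_i$, and let $\mathbf{A}=\bigoplus_{i=1}^n \mathbf{A}_i$ be the block-diagonal $nd\times nd$ matrix. The hypothesis $\sum_i\mathbf{K}_i^{*}\mathbf{K}_i=\mathbf{I}$ says precisely that $\mathbf{V}^{*}\mathbf{V}=\mathbf{I}_d$, i.e.\ $\mathbf{V}$ is an isometry; and since functional calculus respects direct sums, $\mathbf{V}^{*}\mathbf{A}\mathbf{V}=\sum_i\mathbf{K}_i^{*}\mathbf{A}_i\mathbf{K}_i$ and $\mathbf{V}^{*}f(\mathbf{A})\mathbf{V}=\sum_i\mathbf{K}_i^{*}f(\mathbf{A}_i)\mathbf{K}_i$. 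Hence it is enough to prove, for an isometry $\mathbf{V}$ and a Hermitian $\mathbf{A}$ whose spectrum lies in the domain of $f$, that $f(\mathbf{V}^{*}\mathbf{A}\mathbf{V})\preceq \mathbf{V}^{*}f(\mathbf{A})\mathbf{V}$ when $f$ is operator convex, and $\Tr f(\mathbf{V}^{*}\mathbf{A}\mathbf{V})\le \Tr[\mathbf{V}^{*}f(\mathbf{A})\mathbf{V}]$ when $f$ is merely convex.

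For the operator statement I would extend the (orthonormal) columns of $\mathbf{V}$ to an orthonormal basis of $\mathbb{C}^{nd}$, obtaining a unitary $\mathbf{U}$ with $\mathbf{V}=\mathbf{U}\mathbf{J}$, where $\mathbf{J}\colon\mathbb{C}^d\hookrightarrow\mathbb{C}^{nd}$ is the inclusion onto the first $d$ coordinates and $\mathbf{P}\defeq\mathbf{J}\mathbf{J}^{*}$ the associated projection. Set $\mathbf{B}\defeq\mathbf{U}^{*}\mathbf{A}\mathbf{U}$ (Hermitian, same spectrum as $\mathbf{A}$) and $\mathbf{R}\defeq 2\mathbf{P}-\mathbf{I}$ (a reflection, hence unitary). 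Using the identity $\mathbf{P}\mathbf{B}\mathbf{P}+(\mathbf{I}-\mathbf{P})\mathbf{B}(\mathbf{I}-\mathbf{P})=\tfrac12(\mathbf{B}+\mathbf{R}\mathbf{B}\mathbf{R})$ together with operator convexity of $f$ and $f(\mathbf{R}\mathbf{B}\mathbf{R})=\mathbf{R}f(\mathbf{B})\mathbf{R}$, I get
\[
  f\big(\mathbf{P}\mathbf{B}\mathbf{P}+(\mathbf{I}-\mathbf{P})\mathbf{B}(\mathbf{I}-\mathbf{P})\big)\preceq\tfrac12\big(f(\mathbf{B})+\mathbf{R}f(\mathbf{B})\mathbf{R}\big).
\]
The left-hand side is block-diagonal with respect to $\mathbf{P}$, so compressing by $\mathbf{J}$ (a $\preceq$-monotone operation) turns it into $f(\mathbf{J}^{*}\mathbf{B}\mathbf{J})$; on the right, $\mathbf{R}\mathbf{J}=\mathbf{J}$ makes the compression equal to $\mathbf{J}^{*}f(\mathbf{B})\mathbf{J}$. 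Thus $f(\mathbf{J}^{*}\mathbf{B}\mathbf{J})\preceq\mathbf{J}^{*}f(\mathbf{B})\mathbf{J}$, and substituting $\mathbf{V}=\mathbf{U}\mathbf{J}$ and $\mathbf{B}=\mathbf{U}^{*}\mathbf{A}\mathbf{U}$ gives $f(\mathbf{V}^{*}\mathbf{A}\mathbf{V})\preceq\mathbf{V}^{*}f(\mathbf{A})\mathbf{V}$.

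For the trace statement, ordinary convexity suffices and no dilation is needed: diagonalize $\mathbf{V}^{*}\mathbf{A}\mathbf{V}=\sum_j\lambda_j\,u_ju_j^{*}$; the vectors $w_j\defeq\mathbf{V}u_j$ are orthonormal with $\lambda_j=\langle w_j,\mathbf{A}w_j\rangle$, so scalar Jensen applied to the spectral measure of $\mathbf{A}$ at $w_j$ yields $f(\lambda_j)\le\langle w_j,f(\mathbf{A})w_j\rangle$. Summing over $j$ and using $\sum_j w_jw_j^{*}=\mathbf{V}\mathbf{V}^{*}$ gives $\Tr f(\mathbf{V}^{*}\mathbf{A}\mathbf{V})=\sum_j f(\lambda_j)\le\Tr[\mathbf{V}\mathbf{V}^{*}f(\mathbf{A})]=\Tr[\mathbf{V}^{*}f(\mathbf{A})\mathbf{V}]$; specializing to the stacked $\mathbf{V}$ and $\mathbf{A}=\bigoplus_i\mathbf{A}_i$ finishes it.

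The only delicate point is the operator-convexity step in the second paragraph: one must check that $\mathbf{P}\mathbf{B}\mathbf{P}+(\mathbf{I}-\mathbf{P})\mathbf{B}(\mathbf{I}-\mathbf{P})$ still has spectrum in the domain of $f$ (it does, being the orthogonal sum of two compressions of $\mathbf{B}$), that $f$ commutes with unitary conjugation, and that compression by a co-isometry preserves the order $\preceq$. These are exactly the places where genuine \emph{operator} convexity (not just ordinary convexity) is invoked, which is why the trace inequality comes out so much more cheaply. Everything else is bookkeeping with block matrices.
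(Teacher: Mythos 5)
Your dilation argument is correct: the stacking reduction to an isometry, the $\mathbf{P}\mathbf{B}\mathbf{P}+(\mathbf{I}-\mathbf{P})\mathbf{B}(\mathbf{I}-\mathbf{P})=\tfrac12(\mathbf{B}+\mathbf{R}\mathbf{B}\mathbf{R})$ trick for the operator-convex case, and the spectral/scalar-Jensen argument for the trace case all check out (including the domain caveat, since compressions of $\mathbf{B}$ keep the spectrum inside $[\lambda_{\min}(\mathbf{B}),\lambda_{\max}(\mathbf{B})]$). The paper itself gives no proof of this lemma, citing \cite{TroppIntro15} and \cite{HP03}, and what you wrote is exactly the standard Hansen--Pedersen proof underlying those references, so there is nothing to reconcile.
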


Finally, we will need the following matrix identities which we will use to establish a mean value theorem for the Dirichlet form.

\begin{fact}{[Eqn 5.35 in \cite{Bhatia07}]}\label{fct:intNorm} Given any positive semi-definite $\mathbf{A},\mathbf{B}$, any Hermitian $\mathbf{X}$ and any unitarily invariant norm $\| \cdot \|$, we have
  \begin{align*}
    \bigg\|\small\intop_{0}^{1}\mathbf{A}^{t}\mathbf{XB}^{1-t}\mathrm{dt}\bigg\| \leq \frac{1}{2}\|\mathbf{AX} + \mathbf{XB}\|
  \end{align*}
\end{fact}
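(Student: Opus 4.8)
\medskip
\noindent\textbf{Proof proposal.} The plan is to deduce this from the \emph{Heinz inequality} together with a one-line symmetrization. Recall Heinz's inequality: for positive semidefinite $\mathbf{A},\mathbf{B}$, an arbitrary matrix $\mathbf{X}$, any $\nu\in[0,1]$, and any unitarily invariant norm,
\[
  \big\|\mathbf{A}^{\nu}\mathbf{X}\mathbf{B}^{1-\nu}+\mathbf{A}^{1-\nu}\mathbf{X}\mathbf{B}^{\nu}\big\|\;\leq\;\big\|\mathbf{A}\mathbf{X}+\mathbf{X}\mathbf{B}\big\|.
\]
First I would note that the substitution $t\mapsto 1-t$ fixes the integral, so it equals $\tfrac12\intop_{0}^{1}\big(\mathbf{A}^{t}\mathbf{X}\mathbf{B}^{1-t}+\mathbf{A}^{1-t}\mathbf{X}\mathbf{B}^{t}\big)\,\mathrm{d}t$. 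Moving the norm inside the integral (the triangle inequality for matrix-valued integrals) and applying Heinz with $\nu=t$ to each integrand then gives
\[
  \Big\|\intop_{0}^{1}\mathbf{A}^{t}\mathbf{X}\mathbf{B}^{1-t}\,\mathrm{d}t\Big\|\;\leq\;\tfrac12\intop_{0}^{1}\big\|\mathbf{A}^{t}\mathbf{X}\mathbf{B}^{1-t}+\mathbf{A}^{1-t}\mathbf{X}\mathbf{B}^{t}\big\|\,\mathrm{d}t\;\leq\;\tfrac12\big\|\mathbf{A}\mathbf{X}+\mathbf{X}\mathbf{B}\big\|,
\]
which is exactly the claim.

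If a self-contained argument is wanted (equivalently, a proof of the particular instance of Heinz used above), I would diagonalize. After replacing $\mathbf{A},\mathbf{B}$ by $\mathbf{A}+\epsilon\mathbf{I},\mathbf{B}+\epsilon\mathbf{I}$ and letting $\epsilon\downarrow 0$ at the end, assume $\mathbf{A},\mathbf{B}$ are positive definite with spectral decompositions $\mathbf{A}=\sum_{i}\alpha_{i}\mathbf{P}_{i}$ and $\mathbf{B}=\sum_{j}\beta_{j}\mathbf{Q}_{j}$. A direct computation gives
\[
  \intop_{0}^{1}\mathbf{A}^{t}\mathbf{X}\mathbf{B}^{1-t}\,\mathrm{d}t=\sum_{i,j}L(\alpha_{i},\beta_{j})\,\mathbf{P}_{i}\mathbf{X}\mathbf{Q}_{j},\qquad \mathbf{A}\mathbf{X}+\mathbf{X}\mathbf{B}=\sum_{i,j}(\alpha_{i}+\beta_{j})\,\mathbf{P}_{i}\mathbf{X}\mathbf{Q}_{j},
\]
where $L(a,b)=(a-b)/(\ln a-\ln b)$ is the logarithmic mean, so it suffices to bound by $\tfrac12$ the norm of the Schur multiplier with symbol $\psi(a,b)=L(a,b)/(a+b)$ on every unitarily invariant norm. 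Putting $a=e^{x}$, $b=e^{y}$ and $u=(x-y)/2$, I would check the elementary identity $\psi(a,b)=\tfrac12\,(\tanh u)/u$, and then use $(\tanh u)/u=\intop_{0}^{1}\mathrm{sech}^{2}(su)\,\mathrm{d}s$ together with the fact that $\mathrm{sech}^{2}$ has an everywhere-positive Fourier transform (it is, up to scaling, a logistic density) to write $(\tanh u)/u=\intop_{\mathbb{R}}e^{iu\xi}\,\mathrm{d}\nu(\xi)$ for a probability measure $\nu$. Consequently $\psi(a,b)=\tfrac12\intop_{\mathbb{R}}a^{i\xi/2}b^{-i\xi/2}\,\mathrm{d}\nu(\xi)$, hence
\[
  \intop_{0}^{1}\mathbf{A}^{t}\mathbf{X}\mathbf{B}^{1-t}\,\mathrm{d}t=\tfrac12\intop_{\mathbb{R}}\mathbf{A}^{i\xi/2}\big(\mathbf{A}\mathbf{X}+\mathbf{X}\mathbf{B}\big)\mathbf{B}^{-i\xi/2}\,\mathrm{d}\nu(\xi),
\]
and taking norms, using that $\mathbf{A}^{i\xi/2}$ and $\mathbf{B}^{-i\xi/2}$ are unitary and that $\nu$ is a probability measure, finishes it.

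The only non-routine ingredient — and hence the main obstacle — is the input inequality itself, whether one phrases it as the Heinz inequality or as the Schur-multiplier bound $\|\psi\|\leq\tfrac12$; the rest (the symmetrization, the triangle inequality, the diagonalization, the $\tanh$ identity, and the $\mathrm{sech}^{2}$/logistic computation) is standard. Since the statement is needed here only as a citable fact, in the writeup I would simply invoke \cite{Bhatia07}; the Heinz-plus-symmetrization route above is the compact self-contained alternative.
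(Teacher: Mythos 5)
Your proposal is correct, but note that the paper does not prove this statement at all: it is imported as a black-box fact with a citation to \cite{Bhatia07}, so there is no internal proof to compare against. Your first route (symmetrize via $t\mapsto 1-t$ to write the integral as $\tfrac12\int_0^1(\mathbf{A}^t\mathbf{X}\mathbf{B}^{1-t}+\mathbf{A}^{1-t}\mathbf{X}\mathbf{B}^{t})\,\mathrm{d}t$, push the norm inside, and apply Heinz with $\nu=t$ pointwise) is valid and is the quickest derivation, though it trades one citable inequality for another of comparable depth. Your self-contained argument is also sound and is essentially the textbook proof behind Bhatia's (5.35): the computation $\int_0^1\alpha^t\beta^{1-t}\,\mathrm{d}t=L(\alpha,\beta)$ is right, the identity $L(a,b)/(a+b)=\tfrac12\tanh(u)/u$ with $u=(x-y)/2$ checks out, $\tanh(u)/u=\int_0^1\mathrm{sech}^2(su)\,\mathrm{d}s$ is correct, and the positivity of the Fourier transform of $\mathrm{sech}^2$ (characteristic function of the logistic law, $\propto\xi/\sinh(\pi\xi/2)$) legitimately yields the representation $\int_0^1\mathbf{A}^t\mathbf{X}\mathbf{B}^{1-t}\,\mathrm{d}t=\tfrac12\int_{\mathbb{R}}\mathbf{A}^{i\xi/2}(\mathbf{A}\mathbf{X}+\mathbf{X}\mathbf{B})\mathbf{B}^{-i\xi/2}\,\mathrm{d}\nu(\xi)$, after which unitary invariance and the triangle inequality finish it; the $\epsilon$-regularization correctly handles singular $\mathbf{A},\mathbf{B}$ (the endpoint convention for $\mathbf{A}^0$ is immaterial on a null set). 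So either route is a legitimate stand-in for the citation; the Heinz argument buys brevity, while the Schur-multiplier/positive-definite-function argument buys self-containedness and is the proof one would find in the cited source.
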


\begin{fact}{[Eqn 6.42 in \cite{Bhatia07}]}\label{fct:duhamel} For any Hermitian $\mathbf{X},\mathbf{Y}$, we have \begin{align*}
  e^\mathbf{X}-e^{\mathbf{Y}} = \intop_{0}^{1}e^{t\mathbf{X}}(\mathbf{X}-\mathbf{Y})e^{(1-t)\mathbf{Y}}\mathrm{dt}
\end{align*}
\end{fact}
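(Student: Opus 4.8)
The plan is to prove the identity by interpolating between $e^{\mathbf{Y}}$ and $e^{\mathbf{X}}$ along a smooth curve of matrices and applying the fundamental theorem of calculus. Concretely, define $g \colon [0,1] \to \C^{d \times d}$ by $g(t) = e^{t\mathbf{X}} e^{(1-t)\mathbf{Y}}$. Then $g(0) = e^{\mathbf{Y}}$ and $g(1) = e^{\mathbf{X}}$, so once I show that $g$ is continuously differentiable with derivative $g'(t) = e^{t\mathbf{X}}(\mathbf{X}-\mathbf{Y})e^{(1-t)\mathbf{Y}}$, integrating over $[0,1]$ immediately gives the stated formula.

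For the derivative I would use the standard fact that, for any fixed matrix $\mathbf{A}$, the map $t \mapsto e^{t\mathbf{A}}$ is differentiable with $\frac{d}{dt}e^{t\mathbf{A}} = \mathbf{A}e^{t\mathbf{A}} = e^{t\mathbf{A}}\mathbf{A}$; this follows by differentiating the power series $e^{t\mathbf{A}} = \sum_{k \geq 0} t^k \mathbf{A}^k / k!$ term by term, which is legitimate since the series and its formal derivative converge uniformly on compact $t$-intervals. Applying the product rule to $g(t) = e^{t\mathbf{X}} \cdot e^{(1-t)\mathbf{Y}}$, together with the chain rule for the second factor, gives
\begin{align*}
  g'(t) &= \Big( e^{t\mathbf{X}}\mathbf{X} \Big) e^{(1-t)\mathbf{Y}} + e^{t\mathbf{X}} \Big( -\mathbf{Y} e^{(1-t)\mathbf{Y}} \Big) \\
        &= e^{t\mathbf{X}}\mathbf{X}\, e^{(1-t)\mathbf{Y}} - e^{t\mathbf{X}}\mathbf{Y}\, e^{(1-t)\mathbf{Y}} = e^{t\mathbf{X}}(\mathbf{X}-\mathbf{Y})e^{(1-t)\mathbf{Y}}.
\end{align*}
This derivative is continuous in $t$, so applying the fundamental theorem of calculus entrywise yields $e^{\mathbf{X}} - e^{\mathbf{Y}} = g(1) - g(0) = \intop_{0}^{1} g'(t)\,dt = \intop_{0}^{1} e^{t\mathbf{X}}(\mathbf{X}-\mathbf{Y})e^{(1-t)\mathbf{Y}}\,dt$, as claimed.

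I do not expect any genuine obstacle: the only slightly delicate point is justifying term-by-term differentiation of the matrix exponential, which is routine. Note also that Hermiticity of $\mathbf{X}$ and $\mathbf{Y}$ is never used — the identity holds for arbitrary $\mathbf{X},\mathbf{Y} \in \C^{d\times d}$ — and the hypothesis is stated only because this is the regime in which the fact will be invoked later (to build a mean value theorem for the Dirichlet form of $e^{\mathbf{F}}$).
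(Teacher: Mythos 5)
Your proof is correct: the paper itself gives no proof of this fact (it simply cites Bhatia, Eqn.~6.42), and your argument --- differentiating the interpolant $g(t)=e^{t\mathbf{X}}e^{(1-t)\mathbf{Y}}$ and applying the fundamental theorem of calculus --- is exactly the standard derivation behind that citation. Your remark that Hermiticity is not needed is also accurate; the identity holds for arbitrary square matrices, and the hypothesis in the Fact merely reflects the setting in which it is later applied.
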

\begin{lemma}\label{lem:var} Given random Hermitian matrices X and Y, we have
  \begin{align*}
    \mathsf{Tr}\big[ \mathbb{E}[(e^\mathbf{X}-e^\mathbf{Y})^2]\big]^p\leq \frac{1}{2}\mathbb{E}\big[\|\mathbf{X}-\mathbf{Y}\|^{2p}\mathsf{Tr}[e^{2p\mathbf{X}}+e^{2p\mathbf{Y}}]\big]
  \end{align*}
\end{lemma}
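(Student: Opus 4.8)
The plan is to peel off the expectation and the outer $p$-th power by a trace–Jensen step, thereby reducing to a \emph{pointwise} Schatten-norm estimate for $e^{\mathbf{X}}-e^{\mathbf{Y}}$, and then to establish that estimate by feeding Duhamel's formula (Fact~\ref{fct:duhamel}) into the integral-norm bound (Fact~\ref{fct:intNorm}). Throughout I read the left-hand side as $\Tr\big[(\mathbb{E}[(e^{\mathbf{X}}-e^{\mathbf{Y}})^2])^{p}\big]$, i.e.\ with the $p$-th power applied to the (positive semidefinite) matrix before the trace; this is the reading that both homogenizes correctly with the right-hand side and is accessible from the stated tools.

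\textbf{Step 1 (reduction to a pointwise bound).} Set $\mathbf{N} = (e^{\mathbf{X}}-e^{\mathbf{Y}})^2 \succeq 0$. Since $z \mapsto z^p$ is convex on $[0,\infty)$ for $p\ge 1$, the trace–Jensen inequality of Lemma~\ref{lem:jensen} — applied with $\mathbf{K}_i = \sqrt{p_i}\,\mathbf{I}$, so that $\sum_i \mathbf{K}_i^*\mathbf{A}_i\mathbf{K}_i$ realizes $\mathbb{E}\mathbf{N}$ as a convex combination — gives $\Tr[(\mathbb{E}\mathbf{N})^p] \le \mathbb{E}[\Tr(\mathbf{N}^p)]$. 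Because $e^{\mathbf{X}}-e^{\mathbf{Y}}$ is Hermitian, $\Tr(\mathbf{N}^p) = \Tr\big[(e^{\mathbf{X}}-e^{\mathbf{Y}})^{2p}\big] = \|e^{\mathbf{X}}-e^{\mathbf{Y}}\|_{2p}^{2p}$, the Schatten-$2p$ norm to the $2p$. It therefore suffices to prove the pointwise inequality $\|e^{\mathbf{X}}-e^{\mathbf{Y}}\|_{2p}^{2p} \le \tfrac12\|\mathbf{X}-\mathbf{Y}\|^{2p}\,\Tr[e^{2p\mathbf{X}}+e^{2p\mathbf{Y}}]$ and then take expectations.

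\textbf{Step 2 (a mean-value bound for the exponential in Schatten norm, then finish).} By Duhamel (Fact~\ref{fct:duhamel}), $e^{\mathbf{X}}-e^{\mathbf{Y}} = \int_0^1 e^{t\mathbf{X}}(\mathbf{X}-\mathbf{Y})e^{(1-t)\mathbf{Y}}\,\mathrm{d}t$, which is exactly of the form covered by Fact~\ref{fct:intNorm} (with the positive semidefinite matrices $e^{\mathbf{X}}, e^{\mathbf{Y}}$ and the Hermitian matrix $\mathbf{X}-\mathbf{Y}$). Applying Fact~\ref{fct:intNorm} with the unitarily invariant Schatten-$2p$ norm yields $\|e^{\mathbf{X}}-e^{\mathbf{Y}}\|_{2p} \le \tfrac12\|e^{\mathbf{X}}(\mathbf{X}-\mathbf{Y}) + (\mathbf{X}-\mathbf{Y})e^{\mathbf{Y}}\|_{2p}$; then the triangle inequality together with the standard Schatten-times-operator-norm bound $\|\mathbf{C}\mathbf{D}\|_{2p} \le \|\mathbf{C}\|_{2p}\|\mathbf{D}\|$ gives $\|e^{\mathbf{X}}-e^{\mathbf{Y}}\|_{2p} \le \tfrac12\|\mathbf{X}-\mathbf{Y}\|\big(\|e^{\mathbf{X}}\|_{2p}+\|e^{\mathbf{Y}}\|_{2p}\big)$. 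Raising to the $2p$-th power and using $(a+b)^{2p}\le 2^{2p-1}(a^{2p}+b^{2p})$ along with $\|e^{\mathbf{X}}\|_{2p}^{2p} = \Tr[e^{2p\mathbf{X}}]$ (and likewise for $\mathbf{Y}$), the prefactors collapse to $2^{-2p}\cdot 2^{2p-1} = \tfrac12$, which is precisely the pointwise inequality of Step 1; taking expectations and combining with Step 1 completes the proof.

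\textbf{Main obstacle.} The crux is Step 2: obtaining genuinely Lipschitz-type control $\|e^{\mathbf{X}}-e^{\mathbf{Y}}\| \lesssim \|\mathbf{X}-\mathbf{Y}\|\cdot(\text{size of }e^{\mathbf{X}},e^{\mathbf{Y}})$ in a \emph{unitarily invariant} norm, not merely the operator norm, so that the bound survives the $p$-th-moment bookkeeping; Fact~\ref{fct:intNorm} is exactly the instrument that delivers this once Duhamel has produced the integral representation. A minor but important subtlety worth recording: one should not attempt to route Step 1 through operator convexity of $z\mapsto z^p$ — this fails for $p>2$ — since only trace convexity is needed, which is all the second part of Lemma~\ref{lem:jensen} requires.
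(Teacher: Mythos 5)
Your proposal is correct and follows essentially the same route as the paper's proof: trace--Jensen (Lemma~\ref{lem:jensen}) to reduce to a pointwise Schatten-$2p$ estimate, then Duhamel (Fact~\ref{fct:duhamel}) fed into Fact~\ref{fct:intNorm}, followed by the triangle inequality, the power-mean bound $(a+b)^{2p}\le 2^{2p-1}(a^{2p}+b^{2p})$, and $\|e^{\mathbf{X}}(\mathbf{X}-\mathbf{Y})\|_{2p}\le\|\mathbf{X}-\mathbf{Y}\|\,\|e^{\mathbf{X}}\|_{2p}$. Your remark that only trace convexity of $z\mapsto z^p$ (not operator convexity) is needed matches how the paper uses the second part of Lemma~\ref{lem:jensen}.
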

\begin{proof}
  We have
  \begin{align*}
    \mathsf{Tr}\big[ \mathbb{E}[(e^\mathbf{X}-e^\mathbf{Y})^2]\big]^p&\leq \mathbb{E}\big[\mathsf{Tr}\big[ (e^\mathbf{X}-e^\mathbf{Y})^{2p}\big] \ \ \ \text{(using Lemma \ref{lem:jensen})}\\
    &=\mathbb{E}\big[\mathsf{Tr}\big[ \intop_{0}^{1}e^{t\mathbf{X}}(\mathbf{X}-\mathbf{Y})e^{(1-t)\mathbf{Y}}\big]^{2p}\big] \ \ \ \text{(from Fact \ref{fct:duhamel}) }\\
    &=\mathbb{E}\bigg[\bigg\|\intop_{0}^{1}e^{t\mathbf{X}}(\mathbf{X}-\mathbf{Y})e^{(1-t)\mathbf{Y}}\bigg\|_{2p}^{2p}\bigg]&\\
    &\leq \mathbb{E}\bigg[\bigg(\frac{1}{2}\big\|e^{\mathbf{X}}(\mathbf{X-Y})+(\mathbf{X-Y})e^{\mathbf{Y}}\big\|_{2p}\bigg)^{2p}\bigg] \ \ \ \text{(using Fact \ref{fct:intNorm})}\\
    &\leq \mathbb{E}\bigg[\frac{1}{2^{2p}}\bigg(\big\|e^{\mathbf{X}}(\mathbf{X-Y})\big\|_{2p}+\big\|(\mathbf{X-Y})e^{\mathbf{Y}}\big\|_{2p}\bigg)^{2p}\bigg]\ \ \ \text{(using triangle inequality)}\\
    &\leq \mathbb{E}\bigg[\frac{1}{2}\bigg(\big\|e^{\mathbf{X}}(\mathbf{X-Y})\big\|_{2p}^{2p} + \big\|(\mathbf{X-Y})e^{\mathbf{Y}}\big\|_{2p}^{2p} \bigg)\bigg] \ \ \ \text{(using }(a+b)^p \leq 2^{p-1}(a^p + b^p) \ )\\
    &\leq \frac{1}{2}\mathbb{E}\bigg[\|X-Y\|^{2p}\big(\|e^{\mathbf{X}}\|_{2p}^{2p}+\|e^{\mathbf{Y}}\|_{2p}^{2p}\big)\\
    &=\frac{1}{2}\mathbb{E}\bigg[\|X-Y\|^{2p}\mathsf{Tr}\big[e^{2p\mathbf{X}}+e^{2p\mathbf{Y}}\big]\bigg]
  \end{align*}
\end{proof}
We now use this lemma to derive an upper bound on the trace power of the Dirichlet form.
\begin{lemma}\label{lem:dirichlet} Given a reversible Markov Chain with transition matrix $\mathbf{P}\in\mathbb{R}^{\Omega\times \Omega}$ and stationary measure $\mathbf{\pi}\in\mathbb{R}^{\Omega}$, any function $\mathbf{F} : \Omega \rightarrow \mathbb{S}_d$, we have
  \begin{align*}
    \mathsf{Tr}\big[\mathcal{E}(e^\mathbf{F}, e^\mathbf{F})\big]^p \leq v(F)^{2p} \mathsf{Tr}[\mathbb{E}_{x \sim \pi}[e^{2p\mathbf{F(x)}}]]
  \end{align*}
  where $v(F) = \max\limits_{\substack{x,y\in\Omega\\x\sim y}}\|\mathbf{F(x)}-\mathbf{F(y)}\|$
\end{lemma}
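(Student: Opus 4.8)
The plan is to rewrite the Dirichlet form as an expectation over the edge measure of the chain and then feed it directly into Lemma~\ref{lem:var}. Set $w(x,y) = \pi(x)\mathbf{P}(x,y)$. Since $\mathbf{P}$ is stochastic, $\sum_{x,y} w(x,y) = \sum_x \pi(x) = 1$, so $w$ is a probability distribution on $\Omega\times\Omega$; by reversibility $w(x,y) = w(y,x)$, and each of its two marginals equals $\pi$. Letting $(X,Y)$ be a pair drawn from $w$, I would then record
\begin{align*}
  \mathcal{E}(e^\mathbf{F}, e^\mathbf{F}) \;=\; \tfrac12 \sum_{x,y\in\Omega} \pi(x)\mathbf{P}(x,y)\,\big(e^{\mathbf{F}(x)}-e^{\mathbf{F}(y)}\big)^2 \;=\; \tfrac12\,\mathbb{E}\big[\big(e^{\mathbf{F}(X)}-e^{\mathbf{F}(Y)}\big)^2\big],
\end{align*}
the diagonal terms $x=y$ contributing nothing. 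Note that the off-diagonal support of $w$ is exactly the set of pairs $(x,y)$ over which the maximum defining $v(F)$ ranges.

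Next I would apply Lemma~\ref{lem:var} with the random Hermitian matrices $\mathbf{X} := \mathbf{F}(X)$ and $\mathbf{Y} := \mathbf{F}(Y)$ (whose joint law is the pushforward of $w$). Pulling the scalar $\tfrac12$ out of the $p$-th trace power gives
\begin{align*}
  \mathsf{Tr}\big[\mathcal{E}(e^\mathbf{F}, e^\mathbf{F})\big]^p = \frac{1}{2^p}\,\mathsf{Tr}\big[\mathbb{E}[(e^{\mathbf{F}(X)}-e^{\mathbf{F}(Y)})^2]\big]^p \leq \frac{1}{2^{p+1}}\,\mathbb{E}\Big[\big\|\mathbf{F}(X)-\mathbf{F}(Y)\big\|^{2p}\,\mathsf{Tr}\big[e^{2p\mathbf{F}(X)}+e^{2p\mathbf{F}(Y)}\big]\Big].
\end{align*}
On the support of $w$, either $X = Y$ (integrand zero) or $X$ is adjacent to $Y$ in the chain, in which case $\|\mathbf{F}(X)-\mathbf{F}(Y)\| \leq v(F)$; so this factor is bounded pointwise and can be pulled out, leaving $2^{-p-1}\,v(F)^{2p}\,\mathbb{E}\big[\mathsf{Tr}[e^{2p\mathbf{F}(X)}+e^{2p\mathbf{F}(Y)}]\big]$.

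To finish, I would use that $w$ is symmetric with marginals $\pi$, so $\mathbb{E}[\mathsf{Tr}\,e^{2p\mathbf{F}(X)}] = \mathbb{E}[\mathsf{Tr}\,e^{2p\mathbf{F}(Y)}] = \mathsf{Tr}\big[\mathbb{E}_{x\sim\pi}e^{2p\mathbf{F}(x)}\big]$; hence the remaining expectation equals $2\,\mathsf{Tr}[\mathbb{E}_{x\sim\pi}e^{2p\mathbf{F}(x)}]$ and the constants collapse to $2^{-p}\,v(F)^{2p}\,\mathsf{Tr}[\mathbb{E}_{x\sim\pi}e^{2p\mathbf{F}(x)}] \leq v(F)^{2p}\,\mathsf{Tr}[\mathbb{E}_{x\sim\pi}e^{2p\mathbf{F}(x)}]$, which is the claim (with the Dirichlet form normalized without the $\tfrac12$, the inequality holds with no slack). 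I expect essentially no real obstacle here: all of the analytic content — the Duhamel identity, Bhatia's integral-norm inequality, and the operator/trace Jensen inequalities — is already packaged into Lemma~\ref{lem:var}, so the only points needing a moment's care are checking that $w$ is a genuine probability measure (so Lemma~\ref{lem:var} applies verbatim) and invoking reversibility to identify both of its marginals with $\pi$.
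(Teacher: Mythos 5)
Your proposal is correct and follows essentially the same route as the paper: all the analytic work is delegated to Lemma~\ref{lem:var}, with $\|\mathbf{F}(x)-\mathbf{F}(y)\|\leq v(F)$ on the support and reversibility/stochasticity of $\mathbf{P}$ identifying the remaining averages with $\mathbb{E}_{x\sim\pi}$. The only (cosmetic) difference is that you apply Lemma~\ref{lem:var} once to the joint edge measure $w(x,y)=\pi(x)\mathbf{P}(x,y)$, while the paper first pulls the $\pi$-average over $x$ out of the trace power via trace Jensen (Lemma~\ref{lem:jensen}) and then invokes Lemma~\ref{lem:var} conditionally on $x$; your bookkeeping of the $\tfrac12$ normalization is also consistent with the claimed bound.
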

\begin{proof}

  Consider the trace power of the Dirchlet form,
  \begin{align*}
    \mathsf{Tr}\big[\mathcal{E}(e^\mathbf{F},e^\mathbf{F})\big]^p &= \mathsf{Tr}\big[\sum\limits_{x \in \Omega} \mathbb{\pi}_x \sum\limits_{y \in \Omega}\mathbf{P}_{xy}\big(e^\mathbf{F(x)}-e^\mathbf{F(y)}\big)^2\big]^p\\
     &\leq \sum\limits_{x\in\Omega}\pi_x\mathsf{Tr}\big[\sum\limits_{y \in \Omega}\mathbf{P}_{xy}\big(e^\mathbf{F(x)}-e^\mathbf{F(y)}\big)^2\big]^p \ \ \text{(using Lemma \ref{lem:jensen})}\\
    &\leq \frac{1}{2}\sum\limits_{x\in\Omega}\pi_x\sum\limits_{y\in\Omega}\mathbf{P}_{xy}\big(\|\mathbf{F(x)}-\mathbf{F(y)}\|^{2p}\mathsf{Tr}\big[e^{2p\mathbf{F(x)}}+e^{2p\mathbf{F(y)}}\big]\big)\big]\ \ \text{(using Lemma \ref{lem:var} with } \mathbf{X} = \mathbf{F(x)} \text{ and } \mathbf{Y} = \mathbf{F(y)}\text{ )}\\
    &\leq \frac{v(F)^{2p}}{2}\sum\limits_{x,y \in \Omega}\pi_x \mathbf{P}_{xy}\mathsf{Tr}[e^{2p\mathbf{F}(x)}+e^{2p\mathbf{F(y)}}]\\
    &\leq v(F)^{2p} \sum\limits_{x\in\Omega}\pi_x \mathsf{Tr}[e^{2p\mathbf{F(x)}}] \ \ \text{(using reversibility of \textbf{P})}\\
    &=v(F)^{2p} \mathsf{Tr}[\mathbf{E}_{x \sim \pi}[e^{2p\mathbf{F(x)}}]]
\end{align*}
\end{proof}
We are finally ready to use our Matrix valued Poincare inequality to prove our matrix concentration inequality. A standard method to prove matrix concentration uses the exponential moment generating function (mgf) which we state next.
\begin{proposition}{[Matrix Laplace Transform Method \cite{TroppIntro15}]}\label{prop:mgf} Let $\mathbf{X} \in \mathbb{H}_d$ be a random matrix with trace mgf $m(\theta) = \mathsf{Tr}[e^{\theta (\mathbf{X}-\mathbb{E}[\mathbf{X}])}]$. For every $t \in \mathbb{R}$,
  \begin{align*}
    \mathsf{Pr}[\|\mathbf{X}-\mathbb{E}[\mathbf{X}]\|\geq t] \leq 2 \inf\limits_{\theta > 0}e^{-\theta t + \log(m(\theta))}
  \end{align*}
\end{proposition}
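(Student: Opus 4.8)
The plan is to run the standard matrix Laplace-transform (matrix Chernoff) reduction, which turns the spectral-norm tail into a scalar Markov bound on the exponential of the largest eigenvalue. Write $\mathbf{Y} = \mathbf{X} - \mathbb{E}[\mathbf{X}]$ and read $m(\theta)$ as the expected trace mgf $\mathbb{E}\,\Tr[e^{\theta \mathbf{Y}}]$. Since $\|\mathbf{Y}\| = \max\{\lambda_{\max}(\mathbf{Y}),\,\lambda_{\max}(-\mathbf{Y})\}$ for Hermitian $\mathbf{Y}$, a union bound gives
\begin{align*}
  \Pr\big[\,\|\mathbf{Y}\| \geq t\,\big] \;\leq\; \Pr\big[\,\lambda_{\max}(\mathbf{Y}) \geq t\,\big] + \Pr\big[\,\lambda_{\max}(-\mathbf{Y}) \geq t\,\big],
\end{align*}
so it suffices to bound each summand by $\inf_{\theta > 0} e^{-\theta t} m(\theta)$; the factor $2$ in the statement is exactly this union bound, and using the same $m(\theta)$ for both tails is legitimate because the mgf estimate obtained in the next section (from Lemma \ref{lem:dirichlet} and the matrix Poincar\'e inequality) is symmetric under $\mathbf{F}\mapsto -\mathbf{F}$, hence under $\theta\mapsto-\theta$.

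For a fixed $\theta > 0$ I would bound the upper tail as follows. Since $u \mapsto e^{\theta u}$ is increasing, the spectral mapping theorem gives $\lambda_{\max}(e^{\theta \mathbf{Y}}) = e^{\theta\,\lambda_{\max}(\mathbf{Y})}$, so $\{\lambda_{\max}(\mathbf{Y}) \geq t\} = \{\lambda_{\max}(e^{\theta \mathbf{Y}}) \geq e^{\theta t}\}$. Applying scalar Markov to the nonnegative random variable $\lambda_{\max}(e^{\theta \mathbf{Y}})$, and then dominating a positive semidefinite matrix's top eigenvalue by its trace (the sum of its nonnegative eigenvalues),
\begin{align*}
  \Pr\big[\,\lambda_{\max}(\mathbf{Y}) \geq t\,\big] \;\leq\; e^{-\theta t}\,\mathbb{E}\big[\lambda_{\max}(e^{\theta \mathbf{Y}})\big] \;\leq\; e^{-\theta t}\,\mathbb{E}\big[\Tr(e^{\theta \mathbf{Y}})\big] \;=\; e^{-\theta t + \log m(\theta)}.
\end{align*}
Taking the infimum over $\theta > 0$ and running the identical argument with $-\mathbf{Y}$ in place of $\mathbf{Y}$ then yields the claimed bound through the union bound above.

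There is no genuine obstacle here; the argument is essentially that of \cite{TroppIntro15}. The only two points needing a little care are (i) tracking the factor $2$, i.e. checking that the spectral-norm tail really splits into a top-eigenvalue tail for $\mathbf{Y}$ and one for $-\mathbf{Y}$, each governed by the same trace mgf in our setting, and (ii) the inequality $\lambda_{\max}(e^{\theta\mathbf{Y}}) \leq \Tr(e^{\theta\mathbf{Y}})$ — the elementary step that lets a scalar Markov bound control a matrix quantity. All of the substantive work toward Theorem \ref{thm:srBern} is thereby shifted onto the estimate of $m(\theta)$, which is carried out in the next section.
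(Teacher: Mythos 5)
Your argument is correct and is exactly the standard Laplace-transform reduction of \cite{TroppIntro15} that the paper invokes without proof: union bound over the tails of $\lambda_{\max}(\mathbf{Y})$ and $\lambda_{\max}(-\mathbf{Y})$, spectral mapping, scalar Markov, and $\lambda_{\max}(e^{\theta\mathbf{Y}})\leq \Tr(e^{\theta\mathbf{Y}})$. Your side remarks are also the right ones to make, namely reading $m(\theta)$ as the \emph{expected} trace mgf and noting that using the same $m(\theta)$ for both tails is justified because the mgf bound later derived for $\mathbf{F}$ is symmetric under $\mathbf{F}\mapsto-\mathbf{F}$.
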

Hence, to get our concentration inequality, we just need to get a bound on the moment generating function.
\begin{theorem}\label{thm:mgfBound}
Given a reversible Markov Chain with transition matrix $\mathbf{P}\in\mathbb{R}^{\Omega \times \Omega}$ and its stationary distribution $\pi\in \mathbb{R}^{\Omega}$, any function $\mathbf{F} : \Omega \rightarrow \mathbb{S}_d$ such that it satisfies the Matrix valued Poincare inequality with constant $\lambda$, we have
\begin{align*}\mathsf{Pr}_{x \sim \pi}[\|\mathbf{F}(x)-\mathbb{E}_{y \sim \pi}[\mathbf{F(y)}]\|\geq t] \leq 2d e^{-t^2/4(v(\mathbf{F})^2/\lambda + tv(\mathbf{F})/\sqrt{\lambda})}\end{align*}
  where $v(F) = \max\limits_{\substack{x,y\in\Omega\\x\sim y}}\|\mathbf{F(x)}-\mathbf{F(y)}\|$
\end{theorem}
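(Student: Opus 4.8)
The plan is to run, at the level of traces, Ledoux's entropy‑method argument that a Poincar\'e inequality implies sub‑exponential concentration (the ``iterative variance comparison''), using the operator‑convexity and trace inequalities collected above to absorb the non‑commutativity. First, by the matrix Laplace transform method (Proposition~\ref{prop:mgf}) it suffices to control the trace moment generating function $m(\theta)=\mathbb{E}_{x\sim\pi}\Tr\big[e^{\theta(\mathbf{F}(x)-\mathbf{\mu})}\big]$ for $\theta>0$. Since $\mathbf{Var}$, $\mathcal{E}$ and $v(\cdot)$ are all translation invariant, we may center and assume $\mathbf{\mu}=0$; the task is then to show $m(\theta)\le d\exp(\psi(\theta))$ for a sub‑gamma profile $\psi$ on the range $0<\theta<\sqrt{\lambda}/v(\mathbf{F})$.

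The first substantive step is to apply the matrix Poincar\'e inequality to the matrix function $\mathbf{g}=e^{\theta\mathbf{F}/2}$. Since $\mathbf{Var}[\mathbf{g}]=\mathbb{E}[e^{\theta\mathbf{F}}]-\big(\mathbb{E}[e^{\theta\mathbf{F}/2}]\big)^{2}$, this gives
\begin{align*}
  \mathbb{E}[e^{\theta\mathbf{F}}]\ \preceq\ \big(\mathbb{E}[e^{\theta\mathbf{F}/2}]\big)^{2}+\tfrac1\lambda\,\mathcal{E}(e^{\theta\mathbf{F}/2},e^{\theta\mathbf{F}/2}).
\end{align*}
I would then pass to traces of a suitable power, using Lemma~\ref{lem:trMonotone} for monotonicity of $\mathbf{A}\mapsto\Tr[\mathbf{A}^{p}]$, the Schatten triangle inequality to split the two summands, the Jensen inequalities of Lemma~\ref{lem:jensen} to pull the square out of $\big(\mathbb{E}[e^{\theta\mathbf{F}/2}]\big)^{2}$, and Lemma~\ref{lem:dirichlet} applied to the function $\theta\mathbf{F}/2$ (so that $v(\theta\mathbf{F}/2)=\tfrac\theta2 v(\mathbf{F})$), which bounds the Dirichlet term by $\big(\tfrac\theta2 v(\mathbf{F})\big)^{2}\Tr\mathbb{E}[e^{\theta\mathbf{F}}]$; Lemma~\ref{lem:var} is the matrix mean‑value estimate behind this. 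The outcome should be a self‑referential estimate comparing $m$ at $\theta$ with $m$ at $\theta/2$ up to the scalar loss $\big(1-\theta^{2}v(\mathbf{F})^{2}/(4\lambda)\big)^{-1}$.

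Iterating the comparison step along $\theta,\theta/2,\theta/4,\dots$ and using that $\mathbb{E}[\mathbf{F}]=0$ pins down the boundary behaviour $m(\eta)=d\,(1+O(\eta^{2}))$ as $\eta\to0$, the telescoped product $\prod_{k\ge0}\big(1-(\theta/2^{k})^{2}v(\mathbf{F})^{2}/(4\lambda)\big)^{-2^{k}}$ converges for $\theta^{2}v(\mathbf{F})^{2}<4\lambda$ and should give a sub‑gamma bound of the shape $m(\theta)\le d\exp\!\big(\tfrac{\theta^{2}v(\mathbf{F})^{2}/(2\lambda)}{1-\theta^{2}v(\mathbf{F})^{2}/(4\lambda)}\big)$. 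Restricting to $0<\theta\le\sqrt{\lambda}/v(\mathbf{F})$, feeding this into Proposition~\ref{prop:mgf}, and choosing $\theta$ to balance the linear term $-\theta t$ against the quadratic one (a sub‑Gaussian regime when $t\lesssim v(\mathbf{F})/\sqrt\lambda$, a sub‑exponential regime otherwise) yields, after bookkeeping of constants, $\Pr_{x\sim\pi}\big[\|\mathbf{F}(x)-\mathbf{\mu}\|\ge t\big]\le 2d\,e^{-t^{2}/4(v(\mathbf{F})^{2}/\lambda+t\,v(\mathbf{F})/\sqrt{\lambda})}$.

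The hard part is the ``pass to traces'' step. One must arrange the order and the type of the Jensen/monotonicity inequalities (operator‑convex versus merely convex) and the Schatten exponent so that (i) the comparison closes with only the scalar loss above, and (ii) the ambient dimension $d$ enters \emph{only} as the prefactor — equivalently, only as the boundary value $m(0)=d$ of the telescoping, coming from $\Tr\le d\|\cdot\|$ — and never inside the exponent. This is precisely what the $p$‑th power forms of the variance and Dirichlet estimates (Lemmas~\ref{lem:var} and~\ref{lem:dirichlet}) are meant to make possible, and it is the step where the non‑commutativity of the matrices $\mathbf{F}(x)$ genuinely has to be handled.
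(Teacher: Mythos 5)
Your plan follows the paper's route in outline: Proposition~\ref{prop:mgf}, the matrix Poincar\'e inequality applied to $e^{\theta\mathbf{F}/2}$, the Dirichlet-form trace bound of Lemmas~\ref{lem:var} and~\ref{lem:dirichlet}, a halving iteration, and a single factor of $d$ from the centered boundary value. But the step you explicitly defer --- how to ``pass to traces'' so that the comparison closes and $d$ does not enter the exponent --- is precisely where your sketched scheme breaks, and it is where the paper's proof actually lives. Your recursion acts on the scalar trace mgf $m(\theta)=\Tr\,\mathbb{E}[e^{\theta\mathbf{F}}]$: from $(1-\theta^{2}v(\mathbf{F})^{2}/(4\lambda))\,m(\theta)\le \Tr\big[(\mathbb{E}[e^{\theta\mathbf{F}/2}])^{2}\big]$ the only way to reach $m(\theta/2)$ is $\Tr[A^{2}]\le(\Tr A)^{2}$, i.e.\ $m(\theta)\le (1-\cdot)^{-1}m(\theta/2)^{2}$. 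Telescoping that relation, exactly as your product $\prod_{k}(1-(\theta/2^{k})^{2}v^{2}/(4\lambda))^{-2^{k}}$ indicates, leaves a boundary term $m(\theta/2^{K})^{2^{K}}\approx\big(d+O(4^{-K})\big)^{2^{K}}\sim d^{2^{K}}$, since $m(\eta)=d+O(\eta^{2})$ for centered $\mathbf{F}$. So the dimension compounds and the claimed conclusion ``$d$ enters only as the boundary value $m(0)=d$'' is false for the recursion as you set it up; the argument does not close.

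The missing idea is the precise induction statement, Equation~\ref{eqn:indStmt}: the iterate must be $\Tr\big[(\mathbb{E}[e^{\theta\mathbf{F}/2^{k}}])^{2^{k}}\big]$ with the matrix power kept \emph{inside} the trace, which converges to $\Tr[e^{\theta\mathbb{E}\mathbf{F}}]=d$ rather than to $d^{2^{K}}$, and the error at scale $k$ must be keyed back to the \emph{original} mgf rather than to the halved one. Concretely, at scale $k$ one raises the Poincar\'e comparison to the power $p=2^{k-1}$ inside the trace, splits the two summands not by a Schatten triangle inequality but by writing them as a convex combination with weight $\alpha=\lambda v_k^{2}$ and using the trace-convexity form of Jensen (Lemma~\ref{lem:jensen}), and then applies Lemma~\ref{lem:dirichlet} at power $p$ to bound the Dirichlet term by $\lambda v_k^{2}\,\Tr\,\mathbb{E}[e^{\theta\mathbf{F}}]$, which is moved to the left-hand side. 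The losses therefore accumulate additively, as $1-\lambda v^{2}S_k\ge 1-\lambda v^{2}$, giving $m(\theta)\le d/(1-\theta^{2}v(\mathbf{F})^{2}/\lambda)$ (in the normalization of the theorem) and then the stated tail bound after optimizing $\theta$. You correctly name the $p$-th power forms of Lemmas~\ref{lem:var} and~\ref{lem:dirichlet} as the relevant tools, but without restructuring the recursion around the inside-the-trace quantity and the full-mgf error term, the ``hard part'' you flag is not merely unfinished bookkeeping --- the literal iteration you describe fails.
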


\begin{proof}
  For notational convenience, we will write $\lambda$ as the inverse of the actual $\lambda$ value. Hence, our Poincare inequality now looks like $\mathbf{Var}[\mathbf{F}]\preceq\lambda\mathcal{E}[\mathbf{F},\mathbf{F}]$To use Proposition \ref{prop:mgf}, we need an upper bound on the mgf. The statement we want to prove, via induction, is \begin{equation}\label{eqn:indStmt}
  (1-\lambda v(F)^2 S_k )\mathsf{Tr}\big[\mathbb{E}[e^{\mathbf{F(x)}}]\big]\leq  \mathsf{Tr}[\mathbb{E}[e^{\mathbf{F(x)}/2^k}]]^{2^k}
\end{equation}

where $\lambda v(\mathbf{F})^2 \leq 1$ and $S_k = \sum\limits_{i=1}^{k}1/2^i$ which tends to 1 as $k \rightarrow \infty$.
Assuming the induction hypothesis, consider the RHS where $n=2^k$
\begin{align*}
  \mathsf{Tr}[\mathbb{E}[e^{\mathbf{F(x)}/n}]]^n&=\mathsf{Tr}[\sum\limits_{x \in \Omega}\pi_x (1+\mathbf{F(x)}/n + (\mathbf{F(x)}/n)^2/2!+\ldots)]^n\\
  &=\mathsf{Tr}[(1 + \mathbb{E}[F(x)]/n)^n] \rightarrow \mathsf{Tr}[e^{\mathbb{E}\mathbf[F(x)]}]
\end{align*}
where the second inequality follows as we can ignore the second order terms and beyond for $n\rightarrow \infty$.
Now notice that for any $\beta > 0$, we have $v(\beta \mathbf{F}) = \beta v(\mathbf{F})$. Now taking $\theta \mathbf{F}$ in place of $\mathbf{F}$ and the requirement of $\lambda v(\theta \mathbf{F})^2 \leq 1$ translates to $\theta^2 \lambda v(\mathbf{F})^2 \leq 1$, we plug this into Equation \ref{eqn:indStmt}, to get
\begin{equation}\label{eqn:mgfB}
  \mathsf{Tr}\big[\mathbb{E}_{x \sim \pi}[e^{\theta\mathbf{F(x)}}]\big]\leq \frac{\mathsf{Tr}[e^{\theta\mathbb{E}_{x \sim \pi}[\mathbf{F(x)}]}]}{1-\theta^2\lambda v(\mathbf{F})^2}
\end{equation}
Now, using Equation \ref{eqn:mgfB} with $\mathbf{F}$ being $\mathbf{F}-\mathbb{E}[\mathbf{F}]$ which is zero mean and noticing that $v(\mathbf{F}-\mathbb{E}[\mathbf{F}]) = v(\mathbf{F})$, we get that
\begin{equation}\label{eqn:mgfBo}
  \mathsf{Tr}\big[\mathbb{E}_{x \sim \pi}[e^{\theta\mathbf{F(x)}}]\big]\leq \frac{d}{1-\theta^2\lambda v(\mathbf{F})^2}
\end{equation}
Now, in the RHS for Proposition \ref{prop:mgf}, we have
\begin{align*}
  2\inf\limits_{\theta>0}e^{-\theta t + \log m(\theta)}&\leq 2\inf\limits_{\theta^2 \lambda v(\mathbf{F})^2\leq 1}e^{-\theta t + \log m(\theta)}\\
  &=2d\inf\limits_{\theta^2 \lambda v(\mathbf{F})^2\leq 1}e^{-\theta t - \log(1-\theta^2 \lambda v(\mathbf{F})^2)}\\
  &\leq 2d\inf\limits_{\theta^2 \lambda v(\mathbf{F})^2\leq 1}e^{-\theta t +\lambda v(\mathbf{F})^2\theta^2 + \frac{\lambda^2 v(\mathbf{F})^4\theta^4}{2(1-\lambda v(\mathbf{F})^2\theta^2)}}\\
\end{align*}
Differentiating w.r.t $\theta$ and setting to zero, we get that the probability is upper bounded by $2de^{-t^2/4(\lambda v(\mathbf{F})^2)+t \sqrt{\lambda}v(\mathbf{F}))}$ and we would be done.

Now, it just remains to prove the induction statement. Our proof proceeds by a doubling trick to transform the variance of $e^\mathbf{F}$ to the exponential moment generating function. We first prove the base case. Consider $k=1$ and consider the function $\mathbf{f}=\mathbf{F}/2$. Using the matrix valued Poincare inequality on $e^\mathbf{f}$, we have
\begin{align*}
  &\mathbf{Var}(e^\mathbf{f}) = \lambda \mathcal{E}(e^\mathbf{f},e^{\mathbf{f}})\\
  \implies&\mathbb{E}[e^\mathbf{2f}] \preceq (\mathbb{E}[e^\mathbf{f}])^2 + \mathcal{E}(e^\mathbf{f},e^{\mathbf{f}})\\
  &\mathbb{E}[e^\mathbf{F}] \preceq (\mathbb{E}[e^\mathbf{f}])^2 + \mathcal{E}(e^\mathbf{f},e^{\mathbf{f}})
\end{align*}
Taking trace on both sides, we have
\begin{align*}
  \mathsf{Tr}[\mathbb{E}[e^\mathbf{F}]]&\leq \mathsf{Tr}[(\mathbb{E}[e^{\mathbf{F}/2}])^2] + \mathsf{Tr}[\mathcal{E}(e^{\mathbf{F}/2},e^{\mathbf{F}/2})]\\
  &\leq \mathsf{Tr}[(\mathbb{E}[e^{\mathbf{F}/2}])^2] + v(\mathbf{F}/2)^2\mathsf{Tr}[\mathbb{E}_{x\sim\pi}[e^{\mathbf{F}}]] \ \ \text{(using Lemma \ref{lem:dirichlet})}\\
  &=\mathsf{Tr}[(\mathbb{E}[e^{\mathbf{F}/2}])^2] + \frac{v(\mathbf{F})^2}{4}\mathsf{Tr}[\mathbb{E}_{x\sim\pi}[e^{\mathbf{F}}]]\\
  &\leq \mathsf{Tr}[(\mathbb{E}[e^{\mathbf{F}/2}])^2] + \frac{\lambda v(\mathbf{F})^2 S_1}{2}\mathsf{Tr}[\mathbb{E}_{x\sim\pi}[e^{\mathbf{F}}]] \ \ \text{(using }\lambda \geq 1\text{ and } S_1 = 1/2\text{ )}\\
  \hspace{0.5em}\bigg(1-\frac{\lambda v(\mathbf{F})^2 S_1}{2}\bigg)\mathsf{Tr}[\mathbb{E}[e^\mathbf{F}]] &\leq  \mathsf{Tr}[(\mathbb{E}[e^{\mathbf{F}/2}])^2]
\end{align*}
which establishes our base case.
Now assume the induction hypothesis for k-1 and consider the function $\mathbf{f}=\mathbf{F}/2^k$. Denote $v_k = v(\mathbf{F}/2^{k}) = v(\mathbf{F})/2^k$ Using the matrix valued Poincare inequality on $e^\mathbf{f}$ and taking trace to the power $p=2^{k-1}$, we get,
\begin{align*}
  \mathsf{Tr}\big[\mathbb{E}[e^{\mathbf{F}/{2^{k-1}}}]\big]^{2^{k-1}} &\leq \mathsf{Tr}\big[(\mathbb{E}[e^{\mathbf{F}/2^{k}}])^2 + \lambda\mathcal{E}(e^{\mathbf{F}/2^k},e^{\mathbf{F}/2^k})\big]^{2^{k-1}}\\
  &=\mathsf{Tr}\big[(1-\lambda v_k^2 )\frac{(\mathbb{E}[e^{\mathbf{F}/2^{k}}])^2}{(1-\lambda v_k^2)} + \lambda v_k^2)(\mathcal{E}(e^{\mathbf{F}/2^k},e^{\mathbf{F}/2^k})/v_k^2)\big]^{2^{k-1}}\\
  &\leq \frac{\mathsf{Tr}\big[\mathbb{E}[e^{\mathbf{F}/2^{k}}]\big]^{2^k}}{(1-\lambda v_k)^{2^{k-1}-1}} + \lambda v_k^2\mathsf{Tr}\big[(\mathcal{E}(e^{\mathbf{F}/2^k},e^{\mathbf{F}/2^k})/v_k^2)\big]^{2^{k-1}}\ \ \text{(using Jensen's inequality as }\lambda v_k^2 \leq 1 \text{ )}\\
  &\leq \frac{\mathsf{Tr}\big[\mathbb{E}[e^{\mathbf{F}/2^{k}}]\big]^{2^k}}{(1-\lambda v_k^2)^{2^{k-1}-1}} + \lambda v_k^2\mathsf{Tr}[\mathbb{E}_{x \sim \pi}[e^{\mathbf{F}}]] \ \ \text{(using Lemma \ref{lem:dirichlet})}
\end{align*}
Now using the induction hypothesis, we can lower bound the LHS above to get,
\begin{align*}
  (1-\lambda v(F)^2 S_{k-1})\mathsf{Tr}[\mathbb{E}_{x\sim\pi}[e^\mathbf{F}]] &\leq \frac{\mathsf{Tr}\big[\mathbb{E}[e^{\mathbf{F}/2^{k}}]\big]^{2^k}}{(1-\lambda v_k^2)^{2^{k-1}-1}}+ \lambda v_k^2\mathsf{Tr}[\mathbb{E}_{x \sim \pi}[e^{\mathbf{F}}]]\\
  (1-\lambda v(F)^2 S_{k-1}-\lambda v_k^2)\mathsf{Tr}[\mathbb{E}_{x\sim\pi}[e^\mathbf{F}]]&\leq \frac{\mathsf{Tr}\big[\mathbb{E}[e^{\mathbf{F}/2^{k}}]\big]^{2^k}}{(1-\lambda v_k^2)^{2^{k-1}-1}}\\
  \mathsf{Tr}\big[\mathbb{E}[e^{\mathbf{F}/2^{k}}]\big]^{2^k}&\geq(1-\lambda v(F)^2 S_{k-1}-\lambda v_k^2)(1-\lambda v_k^2)^{2^{k-1}-1}\mathsf{Tr}[\mathbb{E}_{x\sim\pi}[e^\mathbf{F}]]\\
  &\geq(1-\lambda v(F)^2 S_{k-1}-\lambda v_k^2)(1-\lambda (2^{k-1}-1)v_k^2)\mathsf{Tr}[\mathbb{E}_{x\sim\pi}[e^\mathbf{F}]] \\
   &\geq (1-\lambda v(F)^2 S_{k-1}-\lambda 2^{k-1} v_k^2)\mathsf{Tr}[\mathbb{E}_{x\sim\pi}[e^\mathbf{F}]]\\
   &\geq (1-\lambda v(F)^2 S_{k-1}-\lambda 2^{k-1} v(F)^2/2^{2k})\mathsf{Tr}[\mathbb{E}_{x\sim\pi}[e^\mathbf{F}]]\\
   &\geq (1-\lambda v(F)^2 S_{k})\mathsf{Tr}[\mathbb{E}_{x\sim\pi}[e^\mathbf{F}]]
\end{align*}
where the fourth inequality follows from $(1-x)^n\geq(1-nx)$ and the last line follows as $2^{k-1}/2^{2k} = 1/2^{k+1} \leq 1/2^k$
\end{proof}
\begin{proof}{[of Theorem \ref{thm:srBern}]} Since our function $\mathbf{F}$ is L-Lipshitz, we know that $v(\mathbf{F})\leq 2L$ as the Hamming distance between any pair of connected states in the Markov Chain is 2. Hence, using Theorem \ref{thm:mgfBound} and the fact that $\lambda\geq1/{2k}$ (from \ref{thm:mpc}), we are done.
\end{proof}
\section{Comparison with \cite{KyngSong18}}\label{sec:ks}
We first state the main result of \cite{KyngSong18} below.
\begin{theorem}\label{thm:kyngsong} Suppose $(x_1,\ldots,x_n)\in\{0,1\}^n$ is a random subset whose distribution is a $k$-homogenous strong Rayleigh distribution. Given a collection of PSD matrices $A_1,\ldots,A_n\in\mathbb{R}^{d\times d}$ s.t. for all $e \in [n]$ we have $\|A_e\|\leq 1$ and $\|\mathbb{E}[\sum_e x_e A_e]\|\leq\mu$. Then for any $\varepsilon > 0$,
  \begin{align*}
    \mathsf{Pr}[\|\sum_e x_e A_e - \mathbb{E}[\sum_e x_e A_e]\|\geq \varepsilon \mu]\leq de^{-\varepsilon^2\mu/(\log k + \varepsilon) \Theta(1)}
  \end{align*}
\end{theorem}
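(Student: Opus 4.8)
The plan is to follow the scalar strong Rayleigh Chernoff argument of Pemantle--Peres \cite{PP14}, lifting it to matrices through the Matrix Laplace Transform Method (Proposition~\ref{prop:mgf}). Write $S=\sum_{e}x_eA_e$, so that $0\preceq A_e\preceq\mathbf{I}$ and $\mathbb{E}[S]\preceq\mu\mathbf{I}$; it then suffices to control the trace moment generating function $\mathbb{E}\big[\mathsf{Tr}\,e^{\theta(S-\mathbb{E}[S])}\big]$ for $\theta$ in an appropriate range and optimize over $\theta$. In the independent case this factors coordinate-by-coordinate (the Ahlswede--Winter/Tropp bound), and the entire difficulty here is the dependence among the $x_e$. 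I would handle it using two closure properties of the strong Rayleigh class \cite{BBL09}: it is stable under conditioning on individual coordinates, and it implies negative association.

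First I would fix an arbitrary ordering of $[n]$, put $\mathcal{F}_i=\sigma(x_1,\dots,x_i)$ and $p_i=\mathbb{E}[x_i\mid\mathcal{F}_{i-1}]$, and form the Doob martingale $Y_i=\mathbb{E}[S\mid\mathcal{F}_i]$ with increments $\Delta_i=Y_i-Y_{i-1}$. The conditional law of $(x_i,\dots,x_n)$ given $\mathcal{F}_{i-1}$ is again strong Rayleigh and, when $\pi$ is $k$-homogeneous, supported on configurations with a fixed number of ones; this yields the exact identity $\sum_{e>i}\big(\mathbb{E}[x_e\mid\mathcal{F}_i]-\mathbb{E}[x_e\mid\mathcal{F}_{i-1}]\big)=-(x_i-p_i)$, while negative association forces every summand to carry the sign of $-(x_i-p_i)$. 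Together with $\|A_e\|\le 1$ this gives $\|\Delta_i\|\le 2|x_i-p_i|\le 2$, and, since the cross terms $\sum_{e>i}(\cdots)A_e$ are then a one-signed combination of matrices with $A_e^2\preceq A_e$, a bound $\mathbb{E}[\Delta_i^2\mid\mathcal{F}_{i-1}]\preceq O(1)\big(p_iA_i+(\text{cross})\big)$ whose sum over $i$ has expectation $\preceq O(\mu)\mathbf{I}$ by the tower rule. This is the crucial gain: the predictable quadratic variation is tied to $\mu=\|\mathbb{E}[S]\|$ rather than to $k$.

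With increments bounded by $O(1)$ and quadratic variation of operator norm $O(\mu)$, the generic matrix Freedman inequality for matrix martingales \cite{TroppIntro15} already produces a tail of the form $d\exp\!\big(-\Theta(1)\,\varepsilon^2\mu/(1+\varepsilon)\big)$. To recover the stated $d\exp\!\big(-\Theta(1)\,\varepsilon^2\mu/(\log k+\varepsilon)\big)$ I would open up the argument and peel the conditional matrix exponential moments one exposure step at a time, absorbing the non-commutativity of $e^{\theta\,\cdot}$ via Golden--Thompson, and replace the crude bound $|x_i-p_i|\le 1$ by a dyadic decomposition of the exposure steps according to the size of $p_i$; the $\log k$ is the resulting logarithmic-in-$k$ union-bound loss.

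The step I expect to be the main obstacle is precisely this non-commutative peeling. Unlike the scalar case, $\mathbb{E}\big[\mathsf{Tr}\,e^{\theta S}\big]$ does not telescope over coordinates even under negative dependence, so one must argue that the Golden--Thompson error accumulated across the $n$ exposure steps costs only a $\log k$ factor rather than $\log n$ or worse. A secondary point needing care is that the negative-association sign argument has to survive reweighting by the PSD matrices $A_e$, so that the cross terms can be controlled in operator norm and not merely in trace --- this is what keeps the final deviation bound at the spectral-norm strength claimed in the theorem.
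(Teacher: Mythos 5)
First, a point of orientation: the paper does not prove this statement at all --- Theorem~\ref{thm:kyngsong} is the result of \cite{KyngSong18}, quoted verbatim in Section~\ref{sec:ks} purely for comparison with Theorem~\ref{thm:srBern}. So the only meaningful comparison is with the actual Kyng--Song proof, which is indeed a matrix-martingale argument built on the Pemantle--Peres exposure martingale \cite{PP14}; your overall strategy points in the same direction, but the sketch has a genuine gap exactly where the real work lies.

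The gap is the claim that the predictable quadratic variation is tied to $\mu$. Two problems. (a) You only assert that $\sum_i \mathbb{E}[\Delta_i^2\mid\mathcal{F}_{i-1}]$ has \emph{expectation} $O(\mu)\mathbf{I}$; a Freedman-type matrix inequality requires a deterministic (or at least high-probability) bound on the operator norm of the predictable quadratic variation, and an expectation bound plus Markov only yields polynomial tails unless one recurses. (b) Even the expectation claim is not justified by the argument given: the cross part of $\Delta_i$ is $\sum_{e>i}\big(\mathbb{E}[x_e\mid\mathcal{F}_i]-\mathbb{E}[x_e\mid\mathcal{F}_{i-1}]\big)A_e$, and for a fixed $e$ the sum over $i$ of these increments is the total variation of the martingale $\mathbb{E}[x_e\mid\mathcal{F}_i]$, which is not bounded by $O(\mathbb{E}[x_e])$; the bound your one-signedness argument actually delivers is $\sum_{e>i}\lvert\cdot\rvert=\lvert x_i-p_i\rvert$ per step, hence a deterministic quadratic-variation bound of order $\sum_i\lvert x_i-p_i\rvert\le 2k$, which reproduces only the Pemantle--Peres-type $k$-dependent tail, not a $\mu$-dependent one. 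Note also the internal red flag: if your $O(\mu)$ claim were correct as used, matrix Freedman would immediately give $d\exp(-\Theta(1)\varepsilon^2\mu/(1+\varepsilon))$ with no $\log k$ at all, a bound strictly stronger than the theorem and one that would resolve the open question raised in the paper's conclusion about removing the $\log k$ from \cite{KyngSong18}.

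Relatedly, the final paragraph (recovering $\log k$ by Golden--Thompson ``peeling'' plus a dyadic decomposition in $p_i$, with $\log k$ as a union-bound loss) is a gesture rather than an argument, and you yourself flag it as the main obstacle. In the actual proof the $\log k$ does not come from a union bound: the quadratic variation of the exposure martingale is itself dominated by a strong-Rayleigh-weighted sum of PSD matrices (using $A_e^2\preceq A_e$), and this self-bounding structure is iterated roughly $\log k$ times, each level costing a constant factor; that recursion is the core technical content, and it is exactly the piece missing from your sketch.
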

Compared to our Theorem \ref{thm:srBern}, immediate differences is that our theorem can handle any symmetric matrix valued Lipshitz function rather than just sums of random matrices associated to each element. Furthermore, \cite{KyngSong18} require their matrices to be PSD whereas we just require the matrices to be symmetric. Finally, in the special case of sums of PSD matrices. While Theorem \ref{thm:kyngsong} is concerned with multiplicative errors whereas we are concerned with additive errors, we can take $t=\varepsilon \mu$ and plugging this into Theorem \ref{thm:srBern}, noticing that sums of spectral norm 1 random matrices have $\Theta(1)$ Lipshitz constant and ignoring constants in both bounds we get that our bound is better when
\begin{align*}
  \frac{\varepsilon^2\mu}{\log(k)+\varepsilon}&\leq \frac{\varepsilon^2\mu^2}{k+\varepsilon\mu\sqrt{k}}\\
  k+\varepsilon\mu\sqrt{k}\leq \mu \log k+\varepsilon\mu
\end{align*}
Now assuming $\varepsilon \leq \frac{1}{\sqrt{k}}$, we get that the above equation is approximately
\begin{align*}
  \mu\geq k/(\log(k))
\end{align*}
Also notice that in this case, $\mu\leq k$ but it is conceivable that $\mu = \Theta(k)$ when there is no strong relation between the underlying strong Rayleigh measure and the matrices for each element. So our bound is better when such a condition is satisfied and we are looking for small error of the form $\varepsilon \leq 1/\sqrt{k}$. If one could establish a Matrix valued modified log-Sobolev and a Matrix Herbst argument, the $\sqrt{k}\varepsilon$ term in our bound will not appear which would allow us to get rid of the $\varepsilon \leq 1/\sqrt{k}$ condition and just require $\mu\geq k/\log(k)$. However, we are not aware of any strategy to get multiplicative bounds even in the scalar concentration settings using log-Sobolev or any other functional inequalities.

\section{Conclusion and Open Questions}
In this paper, we build on the work of \cite{CT13} to obtain matrix concentration of Lipshitz matrix-valued functions using functional inequalities without any stringent assumptions on the rotational symmetry of the random matrices. Our strategy seems general enough that it should work for proving Matrix concentration for a large class of interesting Markov chains for which scalar spectral gap inequalities are proved using the chain decomposition idea of \cite{JSTV04}. We also believe our techniques should be extendible to prove similar concentration for strong log-concave polynomials \cite{ALGV19,CGM19} and would hence imply matrix concentration for uniform distributions on any matroid.

There are several open questions left. The most interesting one seems to be whether one can get a version of matrix valued modified log-Sobolev inequality and a matrix Herbst argument. One main obstacle is that the individual terms inside a natural way to define $\mathcal{E}(\mathbf{f},\log(\mathbf{F}))$ which is what shows up for the modified log-sobolev setting doesn't satisfy the operator convexity properties that we crucially exploited here. Another question is whether one can get rid of the extra $\log(k)$ factor in the multiplicative Chernoff bound in \cite{KyngSong18}, thus implying that $\log(n)/\varepsilon^2$ rather than $(\log n)^2/\varepsilon^2$ spanning trees  suffice for obtaining spectral sparsifiers of graphs.  

%

\addreferencesection
\bibliographystyle{amsalpha}
\bibliography{references}



%

\end{document}